\documentclass[11pt]{amsart}

 \usepackage{graphicx}
 \usepackage{amscd}



\textwidth       15.0cm
\evensidemargin   0.6cm
\oddsidemargin    0.6cm

 \openup 3pt

 \def\a{\alpha}
 
 \def\be{\beta}
 
 \def\de{\delta}
 
 \def\e{\varepsilon}
 \def\deta{{\dot{\eta}}}

 \def\ga{\gamma}
 \def\dga{{\dot{\gamma}}}
 \def\hga{{\widehat{\gamma}}}

 \def\vr{\varphi}
 \def\tphi{{\tilde{\phi}}}

 \def\om{\omega}

 \def\re{{\mathbb R}}

 \def\then{\Longrightarrow}
 \def\ov{\overline}
 \def\Z{{\mathbb Z}}

 \def\md{{\mathrm d}}
 
 \def\D{{\mathbb D}}
 
 \def\hG{{\widehat G}}
 \def\hF{{\widehat F}}

 \def\ff{{\mathfrak f}}
 \def\fg{{\mathfrak g}}
 
 \def\G{{\mathbb G}}
 
 \def\H{{\mathbb H}}

 \def\fh{{\mathfrak h}}
 
 \def\tH{{\widetilde H}}
 
 \def\hH{{\widehat H}}
 
 \def\K{{\mathbb K}}

 \def\cM{{\mathcal M}}
 
 \def\tM{{\widetilde{M}}}
 \def\hM{{\widehat{M}}}

 \def\on{{\ov{n}}}
 \def\oom{{\ov{m}}}

 \def\T{{\mathbb T}}

 \def\tf{{\tilde{f}}}
 \def\tu{{\tilde{u}}}
 \def\tv{{\tilde{v}}}

 \def\tq{{\tilde{q}}}
 
 \def\tq1{{\tilde{q}_1}}

 \def\hatom{{\widetilde{\omega}}}

 \def\hbe{{\widehat{\beta}}}
 
 \def\hd{{\widehat{d}}}

 \def\oH{{\ov{H}}}
 \def\oL{{\ov{L}}}

 \def\pt{\partial_t}
 \def\px{\partial_x}
 \def\py{\partial_y}

 \def \lv{\left\vert}
 \def \rv{\right\vert}
 \def \lV{\left\Vert}
 \def \rV{\right\Vert}
 \def \ov{\overline}
 
 \def \then{\Longrightarrow}

 \DeclareMathOperator{\diam}{diam}

 \DeclareMathOperator{\im}{im}

  \renewcommand{\proofname}{{\bf Proof:}}

 \theoremstyle{plain}

  \swapnumbers

 \newtheorem{Thm}{Theorem}[section]
 
 \newtheorem{Lemma}[Thm]{\bf Lemma}

 \newtheorem{Theorem}[Thm]{\bf Theorem}
 \newtheorem{Proposition}[Thm]{\bf Proposition}

 \theoremstyle{definition}

 \theoremstyle{remark}

 \newtheoremstyle{Cl}
  {5pt}
  {3pt}
  {\sl}
  {}
  {\it}
  {:}
  {.5em}
  {}

 \theoremstyle{Cl}

 \def\begincproof{
                  \renewcommand{\proofname}{\it Proof:}
                  \begin{proof}
                 }

 \def\endcproof{
                \renewcommand{\qedsymbol}{$\diamondsuit$}
                \end{proof}
                \renewcommand{\qedsymbol}{\openbox}
                \renewcommand{\proofname}{\bf Proof:}
               }

\def\bz{{\mathbf z}}


 \renewcommand{\proofname}{{\bf Proof:}}

 \title
 {Homogenization on arbitrary manifolds}

 \author[G. Contreras]{Gonzalo Contreras}

\address{CIMAT \\
          A.P. 402, 36.000 \\
          Guanajuato. GTO \\
          M\'exico.}
\email{gonzalo@cimat.mx}
\thanks{Gonzalo Contreras was partially supported by CONACYT, Mexico, grant  178838, Antonio Siconolfi was partially
supported by  FP7-PEOPLE-2010-ITN, SADCO project, GA number 264735-
SADCO}

 \author[R. Iturriaga]{Renato Iturriaga}

\address{CIMAT \\
          A.P. 402, 36.000 \\
          Guanajuato. GTO \\
          M\'exico.}
\email{renato@cimat.mx}

 \author[A. Siconolfi]{Antonio Siconolfi}

\address{Dipartimento di Matematica\\
                 Universit\`a degli Studi di Roma ``La Sapienza''\\
                  Piazzale Aldo Moro 2, 00185 Roma\\
                   Italy.}
\email{siconolf@mat.uniroma1.it}

\begin{document}

   \begin{abstract}
   We describe a setting for homogenization of convex hamiltonians
   on abelian covers of any compact manifold. In this context we also
   provide a  simple variational proof of standard homogenization
   results.
   \end{abstract}


 \maketitle

 \section{Introduction}

  \parskip +3pt

 In this paper we propose a setting in which homogenization results for
 the Hamilton-Jacobi equation which have only been obtained on the
 torus $\T^n=\re^n/\Z^n$, or equivalently in $\re^n$ with $\Z^n$-periodic conditions,
 can be carried out on arbitrary compact manifolds in a natural way.
 Moreover we show a  simple proof of the homogenization result.

 A homogenization result refers to the convergence of solutions
 $u_\epsilon $ of a problem $P_\epsilon $ with an increasingly fast variation,
 parametrized by $\epsilon$,  to a function $u_0$,
  solution of an ``averaged" problem $P_0$.

 We choose to present the simplest model of homogenization introduced in
 the celebrated paper by Lions, Papanicolaou and Varadhan~\cite{LPV}, leaving
 more sophisticated versions for future work. We believe that this setting
 will allow to translate many classical homogenization results in $\T^n$ to
 more general manifolds.

 A{\it Tonelli Hamiltonian} on the torus $\T^n=\re^n/\Z^n$ is a $C^2$ function
 $H:\re^n\times\re^n \to\re$ which is
   \begin{enumerate}
  \renewcommand{\theenumi}{\alph{enumi}}
  \item \label{na} $\Z^n${\it -periodic}, i.e. $H(x+\bz,p)=H(x,p)$, for all $\bz\in\Z^n$.
   \item \label{nb} {\it Convex:} The Hessian $\dfrac{\partial^2 H}{\partial\, p^2}(x,p)$ is positive definite
                                   for all $(x,p)$.
 \item \label{nc} {\it Superlinear:}  $\lim\limits_{p\to\infty}\dfrac{H(x,p)}{|p|}=+\infty$, uniformly on $x$.
   \end{enumerate}

 In the setting of \cite{LPV} one considers a small parameter $\e>0$
 and  the initial value problem for the Hamilton-Jacobi equation
 \begin{gather}
  \pt u^\e+ H(\tfrac x\e, \px u^\e) = 0, \label{uet} \\
 u^\e(x,0) = f_\e(x).  \label{ueti}
 \end{gather}
 If $f_\e$ is continuous  on $\re^n$ with linear growth,
 from \cite{CrLi1},  \cite{DaviZ}, \cite{Ish1}
we know that there is a unique viscosity solution of the problem
 \eqref{uet}-\eqref{ueti}.  Lions, Papanicolaou and Varadhan
 prove in \cite{LPV} that if $f_\e\to f$ uniformly  when $\e\to 0$
 and $f$ is continuous with linear growth,
 the solutions $u^\e$
 converge locally uniformly  to the unique viscosity solution of the problem \eqref{ut}-\eqref{uti}:
 \begin{gather}
 \pt u + \oH(\px u) = 0, \label{ut}   \\
 u(x,0) = f(x);  \label{uti}
 \end{gather}
 where $\oH$ is a convex hamiltonian which does not depend on $x$ and is called
 the {\it   effective Hamiltonian}. We shall see below several characterizations of
 the effective Hamiltonian $\oH$.

 Equation \eqref{uet} is seen as the Hamilton-Jacobi equation for a
 modified hamiltonian
 \begin{equation}\label{He}
 H_\e(x,p) := H(\tfrac x\e,p).
 \end{equation}
 And it is said that $H_\e\to\oH$ in the sense that the solutions of their
 Hamilton-Jacobi equations converge as stated by the homogenization result.
 The homogenization is interpreted as the convergence of solutions of Hamilton-Jacobi
 equations  when the space is {\it ``seen from
 far away''}. Alternatively, one says that the limiting problems have
  a rapidly oscillating variable $\frac x\e$ which is  {\it ``averaged''}
by the homogenization limit.

 The effective Hamiltonian $\oH$ is usually highly
 non-differentiable, but the solutions of the problem \eqref{ut}-\eqref{uti} are easily
 written because the characteristic curves for the equation \eqref{ut}   are the straight lines,
 and $p=\partial_xu$ is constant along them. Thus
 \begin{equation}\label{uoL}
 u(y,t) =  \min_{x\in\re^n}\left\{f(x) + t\,\oL\left(\tfrac{y-x}t\right) \right\},
 \end{equation}
 where
 \begin{equation}\label{oL}
 \oL(v) := \max_{p\in\re^n} \big[\,p(v) -\oH(p)\,\big]
 \end{equation}
is the {\it effective Lagrangian}.
The simplicity of this limit solution and
in general the possibility of using coarse grids in numerical analysis
for the averaged problem
are the main advantages of the homogenization
in applications.

 It turns out that the effective Lagrangian $\oL$ is  Mather's minimal action functional $\beta$
  and the effective Hamiltonian
  $\oH$ is  Mather's alpha function, the convex
  dual of  $\beta$.
 We recall the construction of the beta function below in \eqref{beta}.
 For now, it is interesting to observe that $\beta=\oL$ is defined in the homology group
 $H_1(\T^n,\re)$, but in \eqref{uoL} it is applied on velocity vectors.
 Similarly $\alpha=\oH$ is defined in the cohomology group $H^1(\T^n,\re)$
 and in equation \eqref{ut} it is applied  on gradients $\partial_x u$.
 This can be done because in the case of the torus $\T^n$ the groups
 $H_1(\T^n,\re)\approx \re^n\approx H^1(\T^n,\re)$ can be identified with
 the second factor in the trivial bundle $T\T^n=\T^n\times\re^n$.

 Mather's alpha function $\a=\oH$ is related with Ma\~n\'e's critical value \cite{Ma7}
 by\footnote{Here
 $(L-P)(x,v):=L(x,v)-P\cdot v$}
 \linebreak
 $\a(P)=c(L-P)$.
 As such, the effective Hamiltonian $\oH=\a$
   has several interpretations, see \cite{CILib}, we name some of them:
 \begin{enumerate}
 \renewcommand\theenumi{\roman{enumi}}
 \item  $\a$ is the convex dual of $\beta$.
 \item  $\a(P) = \inf\{\, k\in\re \;|\; \oint_\ga (L-P+k) \ge 0\quad \forall\text{ closed curve $\ga$ in $\T^n$}\;\}$.
 \item  $\a(P) = \inf\{\, k\in \re\;|\; \Phi_k > -\infty\,\}$, where $\Phi_k:M\times M\to\re$ is
 $$
 \Phi_k(x,y) = \textstyle\inf\big\{\, \oint_\ga (L-P+k) \; \big| \; \ga \text{ curve in $\T^n$ from $x$ to $y$ }\big\}.
 $$
 \item $\a(P) = - \inf\big\{ \int (L -P) \;d\mu\;\big|\;   \mu \text{ is an invariant measure for $L$ }\}.   $
 \item  $\a(P)$ is the energy of the invariant measures $\mu$ which minimize $\int (L-P)\;d\mu$.
 \item \label{vi} From Fathi's  weak KAM theory \cite{Fa1}, \cite{FathiBook},
 $\a(P)$ is the unique constant for which there are
 global viscosity solutions of the Hamilton-Jacobi equation
 $$
 H(x,P+\partial_xv)=\a(P), \qquad x\in\T^n.
 $$
 \item\label{vii}
 From \cite{CIPP}, $\a(P)=\displaystyle\min_{u\in C^1(\T^n,\re)}\;\max_{x\in \T^n} \;H(x,P+\partial_xu)$.
 \item From \cite{CIPP},
          $\a(P)$ is the  minimum of energy levels which contain a lagrangian graph in $T^*\T^n$
          with cohomology class $P$.
 \end{enumerate}

\bigskip

 Following~\cite{LPV},   knowing that the homogenization theorem holds, it is easy to prove that $\oH=\a$.
  Indeed,
 consider the special case of  affine initial conditions,
 $a\in\re^n$, $P\in( \re^n)^*$,
 \begin{equation}\label{il}
 f(x) = u(x,0) = a + P\cdot x.
 \end{equation}
 The solution of
 \begin{equation}\label{HJa}
 u_t+\a(\partial_xu)=0
 \end{equation}
 with initial condition \eqref{il} is
\begin{equation}\label{linsol}
u(x,t) = a + P\cdot x - \a(P)\, t.
\end{equation}
Using \eqref{vi}, fix a $\Z^n$-periodic viscosity solution of the Hamilton-Jacobi equation
 \begin{equation}\label{cell}
 H(x,P+\partial_xv) = \a(P)
 \end{equation}
  given by the weak KAM theorem in \cite{Fa1}.
 Let
  \begin{align}\label{ue}
 u^\e(x,t) &= u(x,t) + \e\;v\big(\tfrac x\e\big).
 \end{align}
 Define $f_\e$ by
  $u^\e(x,0)=f_\e(x)$. Then  $u^\e$ solves the problem \eqref{uet}--\eqref{ueti}.
 Since $f_\e\to f$ and $u^\e\to u$ uniformly, equation \eqref{HJa}
 must be the Hamilton-Jacobi equation for the effective Hamiltonian and hence
  $\oH(P)=\a(P)$.

 \bigskip

  Let $M$ be a compact path--connected riemannian manifold without boundary. 
  A {\it Tonelli hamiltonian} on $M$
 is a $C^2$ function $H:T^*M\to \re$ on the cotangent bundle $T^*M$ which is
 convex and superlinear as in \eqref{nb}, \eqref{nc} above. We want to generalize the
 Lions-Papanicolaou-Varadhan Theorem  to this setting.
 The generalization of their theorem to other compact manifolds has three
 problems, namely

 \begin{enumerate}
 \item \label{p1} It is not clear how to choose the generalization of $\tfrac x\e$.
 \item[(2)] In the modified hamiltonian $H_\e$ in \eqref{He} the base point
           changes to $\tfrac x\e$ but the moment $p$ {\it ``remains the same''}.
           It is not clear how to do this in non-parallelizable manifolds.
           Similarly, the effective Hamiltonian $\oH(P)$ {\it ``does not depend on $x$''}.
           Again, this is not natural if the manifold is not parallelizable.
          \stepcounter{enumi}
 \item\label{p3} Mather's alpha function, the candidate for the effective Hamiltonian,
 is defined on the first cohomology group $\a:H^1(M,\re)\approx\re^k\to\re$,
 which may not be a cover of the manifold. Thus the (limiting) effective Hamiltonian
 and the Hamilton-Jacobi equations for $H_\e$ would be defined on very
 different spaces. In particular, these spaces usually have different dimensions.
 \end{enumerate}

 To solve the last problem we will use an ad hoc definition of convergence of spaces
 very much inspired  by the Gromov Hausdorff convergence.
 For the second problem, a change of variables in the torus allows to change
 the parameter $\e$  in the space variables  $\tfrac x\e$ to the momentum variables.
 Indeed,  write
 \begin{equation}\label{ue0}
 u^\e(x,t) = v^\e\big(\tfrac x\e,t\big).
 \end{equation}
 Then the problem \eqref{uet}-\eqref{ueti} for $v^\e(y,t)$ becomes
 \begin{gather}
 \pt v^\e + H(y,\tfrac 1\e \py v^\e) = 0; \label{ve1} \\
 v^\e(y,0) = f_\e(\e y).  \label{ve2}
 \end{gather}
 Observe that now equation \eqref{ve1} makes sense in any manifold,
 but equation \eqref{ve2} does not. We will take care of this later.

  Given a metric space  $(\cM,\md)$, a family of metric spaces  $(\cM_n,\md_n)$ and continuous maps
  \linebreak
  $F_n:(\cM_n,\md_n)\to (\cM, \md)$, we  give a notion of convergence of $\cM_n$ to $\cM$ through $F_n$, $F_n$ should be
  interpreted as a
   telescope through  which we look at the limit space $\cM$ from $\cM_n$. The definition basically
    requires equivalence of
   distances $\md_n$ and the pullbacks  of $\md$ under $F_n$, with adjustments  to cope  with the case where
   $F_n$ is not injective and/or not surjective.
   We will  say that $\lim_n (\cM_n,\md_n,F_n) = (\cM,\md)$ if
  \begin{enumerate}
   \renewcommand{\theenumi}{\alph{enumi}}
  \item \label{c1}
  There are $K> 1$ and $A_n>0$ such that $\lim_nA_n=0$ with
     \[
  \forall x,y\in \cM_n,\qquad
  K^{-1}\,\md_n(x,y)-A_n \le \md\big(F_n(x),F_n(y)\big)\le K\,\md_n(x,y).
  \]
 \item \label{c2} For any $y\in \cM$ there is a sequence $x_n\in \cM_n$ such that
             $\lim_n F_n(x_n)=y$.
  \end{enumerate}

  Observe that condition~\eqref{c2} implies that for any  metric ball $\K$ of $\cM$
  \begin{equation}\label{ball}
    \K \cap F_n(\cM_n) \neq \emptyset \quad\hbox{for $n$
    sufficiently large,}
\end{equation}
a kind of surjectivity statement. Similarly, using the convention $\diam\emptyset =0$,
condition~\eqref{c1} implies that
$\lim_n \diam F_n^{-1}\{y\} = 0$ for all $y\in M$.
\smallskip

 If $\lim_n(\cM_n,\md_n,F_n)= (\cM,\md)$ and $f_n: \cM_n\to \re$, $f:\cM\to\re$, 
 we say that $f_n$ {\it locally  uniformly $F_n$--
converges } to $f$ if for every $x \in \cM$ and subsequences $y_{n_k} \in \cM_{n_k}$
 with $\lim_{n_k} F_{n_k}(y_{n_k})= x$,  one has $\lim_{n_k}f_{n_k}(y_{n_k})=f(x)$.

If $\lim_n(\cM_n,\md_n,F_n)= (\cM,\md)$ and $f_n: \cM_n\to \re$, $f:\cM\to\re$, we say that
  $f_n$ {\it uniformly $F_n$--converges to} $f$ if it locally uniformly $F_n$--converges to $f$
   and in addition
  \[  \lim_n \sup_{x\in \cM_n} \lv f_n(x) -f(F_n(x))\rv =0.\]

  \medskip

  The initial Hamiltonian will be the lift
  of $H$ to the maximal free abelian cover of $M$ defined as follows.
   Let $\tM$ be the covering space of $M$ defined by
 $\pi_1(\tM) = \ker {\fh}$, where
 $\fh:\pi_1(M)\to H_1(M,\Z)$
 is the Hurewicz homomorphism. Its group of deck transformations is
 $\G= \text{im}[\pi_1(M)\to H_1(M,\Z)]$, which is a free abelian group,
 $\G\approx \Z^k\subset H_1(M,\re) \approx \re^k$.
 Observe that the large-scale structure of the covering space $\tM$
 is given by $\G=\Z^k$. In the homogenized problem the position space, or configuration space, is
the homology group $x\in H_1(M,\re)\approx\re^k$, and the momenta, and the derivatives $\partial_xu$,
are in its dual, the cohomology group $\{p, \, \partial_xu\}\subset H^1(M,\re)\approx \re^k$.

  Let $d$ be the metric on $\tM$ induced by the lift of the riemannian metric on $M$, set for any $\e >0$,
   $d_\e:=\e d$.
 Then $(\tM, d_\e)$ converges to $H_1(M,\re)$ in the same way as
 $\e\Z^k$ converges to $\re^k$ or $\e\,\G\overset{\e}\longrightarrow H_1(M,\re)$.

 To be precise,
 fix a basis $c_1,\ldots, c_k$ of $H^1(M,\re)$ and fix closed 1-forms $\om_1,\ldots,\om_k$ in $M$
 such that $\om_i$ has cohomology class $c_i$. By the universal coefficient theorem
 $H_1(M,\re)=H^1(M,\re)^*$.
 Let $G:\tM\to H_1(M,\re)$ be given by
 \begin{equation}\label{defG}
    G(x)\cdot \Big(    \sum_i a_i \, c_i\Big) = \oint_{x_0}^x\Big(  \sum_i a_i \,\hatom_i\Big),
\end{equation}
 where $x_0$ is a base point in $\tM$ and $\hatom_i$ is the lift of $\om_i$ to $\tM$.
 Since $\hatom$ is exact, the integral does not depend on the choice of the path
 from $x_0$ to $x$. Notice that the function $G$ depends on the choice of $x_0$,
 on the choice of the basis $\{c_i\}$ and of representatives  $\om_i$ in $c_i$. However, we shall be interested in the functions $\e G$ for $\e$ small, and these
 dependencies disappear in the limit $\e\to 0$. On the points in $\tM$ in the same fiber as
 $x_0$ the value of $G$ does not depend on  the chosen basis $\{\om_i\}$ because
 in that case it is an integral on a closed curve in $M$. In general, if $x$, $y$ belong to the same fiber, then $G(x)- G(y)$
 is the  transformation in  $\G= \text{im}[\pi_1(M)\to H_1(M,\Z)]$ carrying $y$ to $x$, condition which uniquely identifies it,
 conversely, any deck transformation admits a representation of this type. We set $F_\e= \e \, G$.

 \begin{Proposition} \label{Pspaces}
 For the maximal free abelian cover we have that
 $$
 \lim_\e (\tM,d_\e,F_\e) = H_1(M,\re).
 $$
 \end{Proposition}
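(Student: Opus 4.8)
The plan is to verify conditions \eqref{c1} and \eqref{c2} in the definition of $\lim_\e(\tM,d_\e,F_\e)=H_1(M,\re)$ by showing that the map $G$ of \eqref{defG} is a quasi-isometry from $(\tM,d)$ onto $H_1(M,\re)$, the latter endowed with the metric of a fixed norm $\|\cdot\|$ (all norms being equivalent, consistent with the remark after \eqref{defG}). Normalizing $x_0$ so that $G(x_0)=0$, I will freely use the following facts, recorded before the statement: $\tM$ is complete, hence a proper geodesic space by Hopf--Rinow; the deck group $\G$ acts on $\tM$ properly discontinuously, cocompactly, and by isometries; $\G$ is a full-rank lattice in $H_1(M,\re)\cong\re^k$; and $G$ is equivariant, $G(\gamma x)=G(x)+\gamma$ for $\gamma\in\G\subset H_1(M,\re)$ and $x\in\tM$ (this is the identification of $G(x)-G(y)$ with the deck transformation carrying $y$ to $x$ noted before \eqref{defG}; in particular $G(\gamma x_0)=\gamma$). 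The proof then reduces to two estimates valid for all $x,y\in\tM$, with fixed $C_1,C_2>0$ and $C_3\ge0$:
\[
\|G(x)-G(y)\|\le C_1\,d(x,y)\qquad\text{and}\qquad\|G(x)-G(y)\|\ge C_2\,d(x,y)-C_3.
\]
The first I would obtain directly from \eqref{defG}: in the basis of $H_1(M,\re)$ dual to $c_1,\dots,c_k$ the $i$-th coordinate of $G(x)-G(y)$ equals $\oint_y^x\hatom_i$, which is at most $\big(\sup_M|\om_i|\big)\,d(x,y)$ in absolute value once $\hatom_i$ is integrated along a near-minimizing path; $\sup_M|\om_i|<\infty$ by compactness of $M$, and one takes $C_1$ proportional to $\max_i\sup_M|\om_i|$.

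The lower bound is the heart of the matter, and the step I expect to cost the most work: it is there that cocompactness of the $\G$-action is indispensable (without it $G$ could collapse unbounded distances), and I would prove it in the manner of the Milnor--\v{S}varc lemma. First fix $R>0$ with $\tM=\bigcup_{\gamma\in\G}\gamma\,B(x_0,R)$, and set $S:=\{\gamma\in\G:d(x_0,\gamma x_0)\le 2R+1\}$. This $S$ is finite (by properness of $\tM$ and proper discontinuity of the action) and generates $\G$ by the standard argument: subdivide a geodesic from $x_0$ to $\gamma x_0$ into subarcs of length $<1$, approximate each division point by a point of the orbit $\G x_0$, and observe that consecutive orbit points so obtained differ by an element of $S$. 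Moving $x_0$ one generator at a time then gives $d(x_0,\gamma x_0)\le\big(\max_{s\in S}d(x_0,sx_0)\big)\,\ell_S(\gamma)$, where $\ell_S$ is word length; and since on the abelian group $\G\cong\Z^k$ the word length is bounded above by a multiple of any norm, $\ell_S(\gamma)\le L'\,\|\gamma\|=L'\,\|G(\gamma x_0)\|$. Hence $d(x_0,\gamma x_0)\le C\,\|G(\gamma x_0)\|$ for some constant $C$, and so, by equivariance, $d(g x_0,h x_0)\le C\,\|G(g x_0)-G(h x_0)\|$ for all $g,h\in\G$. Finally, for arbitrary $x,y\in\tM$ I would choose $\gamma_x,\gamma_y\in\G$ with $d(x,\gamma_x x_0)\le R$ and $d(y,\gamma_y x_0)\le R$; the triangle inequality, the previous display, and the upper Lipschitz bound applied to $G(x)-G(\gamma_x x_0)$ and to $G(y)-G(\gamma_y x_0)$ then give
\[
d(x,y)\le C\,\|G(x)-G(y)\|+\big(2R+2C\,C_1\,R\big),
\]
which rearranges to the lower bound with $C_2=1/C$ and $C_3=C_2\,(2R+2C\,C_1\,R)$.

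Granting both estimates, condition \eqref{c1} holds with $F_\e=\e G$, $d_\e=\e d$, $K:=\max\{C_1,1/C_2,2\}>1$, and $A_\e:=\e\,C_3$, which tends to $0$ as $\e\to0$; indeed
\begin{align*}
K^{-1}d_\e(x,y)-A_\e
&\le C_2\,d_\e(x,y)-A_\e=\e\big(C_2 d(x,y)-C_3\big)\\
&\le\e\,\|G(x)-G(y)\|=\md(F_\e x,F_\e y)\le\e\,C_1 d(x,y)\le K\,d_\e(x,y).
\end{align*}
For condition \eqref{c2}, fix $R_0>0$ such that the lattice $\G$ is $R_0$-dense in $H_1(M,\re)$; given $y\in H_1(M,\re)$ and $\e>0$, choose $\gamma_\e\in\G$ with $\|y-\e\,\gamma_\e\|\le\e R_0$ (possible since $\e\,\G$ is $\e R_0$-dense) and put $x_\e:=\gamma_\e x_0\in\tM$. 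Then $F_\e(x_\e)=\e\,G(\gamma_\e x_0)=\e\,\gamma_\e$, whence $\|F_\e(x_\e)-y\|\le\e R_0\to0$, i.e. $\lim_\e F_\e(x_\e)=y$. This verifies both conditions and proves the proposition; the only genuinely nontrivial ingredient is the Milnor--\v{S}varc-type estimate of the previous paragraph, everything else being bookkeeping.
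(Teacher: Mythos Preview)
Your proof is correct and follows the same overall architecture as the paper's: establish that $G$ is a quasi-isometry (upper Lipschitz bound from integrating the forms $\hatom_i$, lower bound via the $\G$-orbit, density of $\e\G$ for condition \eqref{c2}), then multiply by $\e$. The upper bound and condition \eqref{c2} are essentially identical to the paper's arguments.

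The one substantive difference is in the lower bound. You invoke the Milnor--\v{S}varc machinery: build a finite generating set $S$ from the geometry, bound $d(x_0,\gamma x_0)$ by a constant times the word length $\ell_S(\gamma)$, and then bound $\ell_S(\gamma)$ by a norm on $\Z^k$. The paper instead exploits from the outset that $\G\cong\Z^k$ comes with an explicit basis $e_1,\dots,e_k$: choosing minimal geodesics $\hga_i$ from $x_0$ to $e_i(x_0)$ and concatenating their projections gives directly
\[
d(x_0,x_0+\on)\le \big(\max_i\length\hga_i\big)\, k\,\|\on\|,
\]
with no detour through word length or a geometrically produced generating set. Both routes then finish the same way (pick nearby orbit points $x_0+\on$, $x_0+\oom$ within $D=\diam M$ of $x,y$, apply the triangle inequality and the upper Lipschitz bound for $G$). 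Your approach is more general-purpose---it would work verbatim for any group acting geometrically---while the paper's is shorter and more elementary in this specific setting, since the abelian structure of $\G$ is already known and a basis is at hand; the step $\ell_S(\gamma)\le L'\|\gamma\|$ that you cite ultimately unwinds to the same concatenation-along-a-basis idea anyway.
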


 We finally  address the problem of initial conditions. If we want to mimic  the torus case where the same initial datum
 can be taken for approximating
 as well  as for limit equations,  we can think of transferring  a continuous datum $f$ defined in $H_1(M, \re)$ to $\tM$ by
 setting
 $f_\e(y)=f(F_\e(y))$, which is interpreted as $f$ ``seen'' on $(\tM, d_\e)$. We recognize in the relationship between
 $f$ and $f_\e$
 a special instance of uniform $F_\e$--convergence, and this is indeed the way  in which
 the matter will be presented in the forthcoming statement of the main
 homogenization result. The proof of this theorem will be given in Section \ref{STA}.

 \begin{Theorem}\quad\label{TA}

 Let $H:T^*M\to\re$ be a Tonelli hamiltonian on a compact manifold $M$.
 Let
 \linebreak
 $f_\e:\tM\to\re$, $f: H_1(M,\re) \to \re$  be  continuous  with $f$ of at most linear growth, assume that
 $f_\e$   uniformly $F_\e$--converges   
  to $f$.
 Let $\tH:T^*\tM\to\re$ be the
 lift of $H$ to $\tM$.
 Let $v^\e$ be the  viscosity solution to the
 problem
 \begin{gather}
 \phantom{\qquad x\in \tM,\; t>0.}
 \partial_t v^\e + \tH(x,\tfrac 1\e\partial_x v^\e) = 0, \qquad x\in \tM,\; t>0;
 \label{vve1}\\
 v^\e(x,0) = f_\e(x).
 \label{vve2}
 \end{gather}

 Then the family of functions $v^\e:\tM_\e\times[0,+\infty[\to\re$
 locally uniformly  $F_\e$--converges  in $ \tM \times ]0,+ \infty[$
 \footnote{Our proof of the uniformity does not extend to $t=0$.}
  to the viscosity solution
 $u:H_1(M,\re)\times[0,+\infty[\to\re$ of the problem
\begin{gather}
\phantom{\qquad h\in H_1(M,\re),\; t>0.}
\partial_t u + \oH(\partial_h u) = 0, \qquad h\in H_1(M,\re),\; t>0;
\label{uue1}\\
u(h,0) = f(h);
\label{uue2}
\end{gather}

where the effective Hamiltonian $\oH$ is Mather's alpha function $\oH=\a:H^1(M,\re)\to\re$.

 \end{Theorem}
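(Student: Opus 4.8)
Our proof is variational: we represent $v^\e$ by a Lax--Oleinik (value--function) formula on the cover $\tM$, rescale time so as to read it as a long--time minimisation problem, and then identify the limit through the Hopf--Lax formula \eqref{uoL} for the effective equation. \textbf{Step 1 (variational formula and time rescaling).} Since $\tM$ is a regular cover of the compact manifold $M$ and carries a $\Z^k$--periodic Riemannian geometry, the Cauchy problem \eqref{vve1}--\eqref{vve2} is well posed for continuous $f_\e$ of at most linear growth and its solution obeys the Lax--Oleinik formula; the Lagrangian dual of $p\mapsto\tH(x,\tfrac1\e p)$ being $w\mapsto\tL(x,\e w)$ with $\tL=\tH^{*}$, one has
\[
v^\e(x,t)=\inf\Big\{\,f_\e(\ga(0))+\int_0^t\tL\big(\ga(s),\e\,\dot\ga(s)\big)\,\md s\ :\ \ga\ \text{absolutely continuous},\ \ga(t)=x\,\Big\}.
\]
Setting $\de(\si):=\ga(\e\si)$ for $\si\in[0,T]$ with $T:=t/\e$ turns this into
\[
v^\e(x,t)=\inf\Big\{\,f_\e(\de(0))+\e\int_0^{T}\tL(\de,\dot\de)\,\md\si\ :\ \de(T)=x\,\Big\},\qquad T=t/\e,
\]
so everything reduces to understanding the rescaled action $\e\int_0^T\tL$ along curves of time length $T\to\infty$.

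\textbf{Step 2 (the effective solution and the lower bound).} By the classical Hopf--Lax formula, the unique viscosity solution of \eqref{uue1}--\eqref{uue2} is $u(h,t)=\inf_{h'\in H_1(M,\re)}\big[f(h')+t\,\oL(\tfrac{h-h'}{t})\big]$, where $\oL=\oH^{*}=\a^{*}=\beta$ is Mather's action function (characterization (i) of the list). Hence it suffices to show $v^{\e_n}(y_n,t)\to u(h,t)$ whenever $\e_n\to0$ and $F_{\e_n}(y_n)\to h$. For the lower bound, fix $P=\sum b_ic_i\in H^1(M,\re)$ and, by the inf--sup characterization \eqref{vii} of $\a$, a function $u_P\in C^1(M)$ such that the closed $1$--form $\om_P:=\sum b_i\om_i+\md u_P$ satisfies $H(\cdot,\om_P)\le\a(P)$ on $M$; let $\widehat{\om}_P$ be its lift to $\tM$. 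Then $\tL(x,w)\ge\langle\widehat{\om}_P,w\rangle-\tH(x,\widehat{\om}_P)\ge\langle\widehat{\om}_P,w\rangle-\a(P)$ on $T^*\tM$; integrating along a competitor $\de$ on $[0,T]$, using $\int_\de\widehat{\om}_P=P\cdot\big(G(\de(T))-G(\de(0))\big)+\big(u_P\!\circ\!\pi\big)(\de(T))-\big(u_P\!\circ\!\pi\big)(\de(0))$ and that the last difference is bounded by $2\max_M|u_P|$ (here compactness of $M$ is essential), and multiplying by $\e=t/T$, we get
\[
\e\int_0^{T}\tL(\de,\dot\de)\,\md\si\ \ge\ P\cdot\big(F_\e(\de(T))-F_\e(\de(0))\big)-t\,\a(P)+O(\e).
\]
A superlinearity bound on $\tL$ together with the linear growth of $f$ confines the relevant competitors to $\de(0)$ with $\rho:=\tfrac{F_\e(x)-F_\e(\de(0))}{t}$ in a fixed bounded set, so for each such $\de$ we may choose $P\in\partial\beta(\rho)$, whence $P\cdot(t\rho)-t\,\a(P)=t\,\beta(\rho)$. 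Taking the infimum over $\de$ and invoking the uniform $F_\e$--convergence $f_\e\to f$ together with the asymptotic surjectivity of $F_\e$ (Proposition~\ref{Pspaces}) yields $\liminf_n v^{\e_n}(y_n,t)\ge u(h,t)$.

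\textbf{Step 3 (the upper bound).} Fix $h'\in H_1(M,\re)$ and put $\rho:=\tfrac{h-h'}{t}$. By Mather's theorem there is an action--minimising probability measure with rotation vector $\rho$; decomposing it into ergodic components and extracting, by a Carathéodory argument (licit because $\beta$ is affine on the convex hull of the rotation vectors of those components), finitely many ergodic pieces with rotation vectors $\rho_j$ and weights $\la_j$ such that $\sum_j\la_j\rho_j=\rho$ and $\sum_j\la_j\beta(\rho_j)=\beta(\rho)$, the Birkhoff ergodic theorem supplies genuine Euler--Lagrange orbits whose time averages of $L$ and of the forms $\om_i$ tend to $\beta(\rho_j)$ and $\rho_j$. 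Concatenating suitably long lifted arcs of these orbits, joined by connecting geodesics of length $\le\diam M$, and using regularity of the covering to pick the first lift so that the terminal point is exactly $y_n$, produces curves $\de_n$ on $[0,t/\e_n]$ with $\de_n(t/\e_n)=y_n$, with $\e_n\int\tL(\de_n,\dot\de_n)\to t\,\beta(\rho)$, and with $F_{\e_n}(\de_n(0))\to h-t\rho=h'$ (this last convergence being forced by the rotation--vector bookkeeping, not imposed). Hence $\limsup_n v^{\e_n}(y_n,t)\le f(h')+t\,\beta(\rho)$, and taking the infimum over $h'$ gives $\limsup_n v^{\e_n}(y_n,t)\le u(h,t)$.

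Steps 2 and 3 together give the locally uniform $F_\e$--convergence of $v^\e$ to $u$ on $\tM\times\,]0,+\infty[$, and since $\oL^{*}=\oH=\a$ the limit $u$ solves \eqref{uue1}--\eqref{uue2}; the uniform version on compact $t$--intervals of $]0,+\infty[$ then follows by adjoining equi--Lipschitz estimates (in $x$ and in $t$) for $v^\e$ and for $u$. \emph{The main obstacle is Step 3}: on the non--compact cover one must construct curves that are \emph{simultaneously} near action--minimising and of prescribed asymptotic homology while also hitting the prescribed terminal point $y_n$ exactly, which forces one through the Aubry--Mather apparatus --- existence of minimising measures for every rotation vector, affineness of $\beta$ along rotation--vector simplices, orbit approximation of minimising measures --- and through the coarse density of $\e\,\G$ in $H_1(M,\re)$ furnished by Proposition~\ref{Pspaces}. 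The duality lower bound of Step 2 is softer, but it still relies on the boundedness of the potentials $u_P$ on the compact base $M$, so that $u_P\!\circ\!\pi$ oscillates by $O(1)$ along any lifted curve, and on the superlinearity confinement to bounded rotation vectors.
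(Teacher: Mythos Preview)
Your argument is essentially correct and reaches the same conclusion, but it follows a genuinely different route from the paper's proof. The paper reduces everything to a single black box, Mather's uniform estimate (Proposition~\ref{MatP}) stating that $\tfrac1T\tphi(x,y,T)\to\be\big(\tfrac{G(y)-G(x)}{T}\big)$ uniformly on bounded rotation vectors; with this in hand and the confinement Lemma~\ref{lemma1}, both the $\liminf$ and $\limsup$ inequalities fall out in a few lines by passing to optimal $y_\e$ (lower bound) or choosing $y_\e$ with $F_\e(y_\e)\to\bar q$ optimal for $u$ (upper bound). You instead prove each inequality from first principles: the lower bound via Fenchel duality against the $C^1$ critical subsolutions supplied by characterization~\eqref{vii}, and the upper bound by an explicit curve construction through ergodic decomposition, Carath\'eodory, Birkhoff, and concatenation on the cover. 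In effect you are reproving the two halves of Proposition~\ref{MatP} by separate mechanisms. Your approach is more self-contained and makes the weak~KAM and Aubry--Mather machinery visible; the paper's is much shorter because it outsources all of that to~\cite{Mat5}.

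Two points deserve tightening. In Step~2 you choose $P\in\partial\be(\rho)$ depending on the competitor, so the error term $2\e\max_M|u_P|$ must be controlled uniformly for $P$ in a compact set; this is true (e.g.\ via Lipschitz weak~KAM subsolutions with constants depending only on the energy level $\a(P)$), but you should say so. Second, you fix $t$ throughout and only invoke equi-Lipschitz estimates at the end to handle $t_n\to t_0$; the paper's formulation of local uniform $F_\e$--convergence requires treating sequences $(y_n,t_n)$ with $t_n\to t_0>0$ directly, and while your argument extends to this case with no new ideas, it would be cleaner to carry a varying $t_\e$ from the start as the paper does.
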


 Several comments are still in order:

 1) The convergence destroys the differential structure of the spaces.
 Nevertheless we  obtain convergence of solutions $v^\e$ to a solution
 of a partial differential equation on the limit space because the Hamilton-Jacobi
 equation is an encoding of a variational principle. Namely, its solutions
 are the minimal cost functions under the Lagrangian. This variational
 principle is preserved under the limit of spaces.

2) In this setting it is possible to prove the homogenization theorem
     using standard methods.
    This is very good news since we  expect that many homogenization results
     generalize from the torus to other manifolds.
However, using a result of Mather, we will provide another proof,
which is essentially a change of variables in the Lax formula.

3) Motivated by possible applications we extend the result to other  Abelian covers.

 \begin{figure}[h]\label{cover}
\resizebox*{11cm}{9cm}{\includegraphics{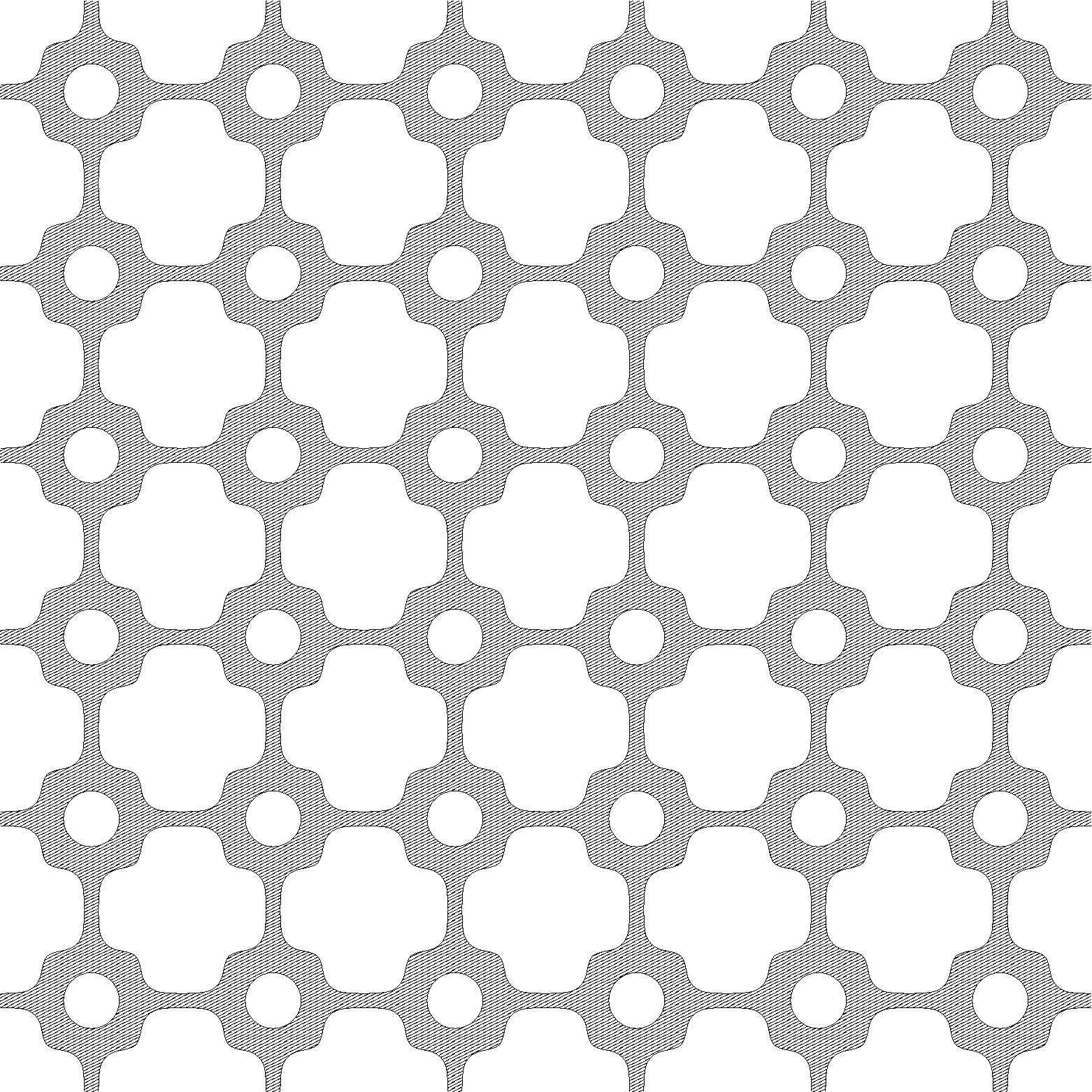}}
 \caption{A free abelian (sub)cover of the compact orientable surface of genus 3 with
group of covering transformations $\Z^2$.}
\end{figure}

\subsection{Subcovers}\quad

For general abelian covers, i.e. covering spaces whose group of deck transformations $\D$ is abelian, the torsion part of $\D$ is killed under the
limit of $\hM_\e=(\hM,\e\hd)$. Thus the limit is the same as for a free abelian
cover, where $\D$ is free abelian. These coverings are subcovers of the
maximal free abelian cover $\tM$. In this case we have similar results as
in Theorem~\ref{TA}.

Let $L:TM\to\re$ be the lagrangian of $H$ i.e.
 \begin{equation*}
 L(x,v) := \max_{p\in T^*_xM} \big[\, p(v) - H(x,p)\,].
 \end{equation*}
 The Euler-Lagrange equation for $L$ is
 \begin{equation}\label{EL}
 \tfrac d{dt} \partial_v L = \partial_x L.
 \end{equation}
 It determines a complete flow $\vr$ on $TM$ by $\vr_t(x,v)=(\ga(t),\dga(t))$ where
 $\ga$ is the solution of \eqref{EL} with $(\ga(0),\dga(0))=(x,v)$.
 Given an invariant Borel probability $\mu$ for $\vr_t$ with compact support
 define its {\it homology class} $\rho(\mu)\in H_1(M,\re) = H^1(M,\re)^*$ by
 $$
 \rho(\mu) \cdot c = \int_{TM} \om \, d\mu,
 $$
 where $\om$ is any closed 1-form on $M$ with cohomology class $c$.
 Mather's minimal action function is $\be:H_1(M,\re)\to\re$,
 \begin{equation}\label{beta}
 \be(h) :=\inf_{\rho(\mu)=h}\int L\;d\mu,
 \end{equation}
 where the infimum is over all $\vr_t$-invariant probabilities with homology $\rho(\mu)=h$.

   Free abelian covers $\hM$ are obtained from  normal subgroups of the fundamental group  $\pi_1(M)$
   containing the commutators. If  $\fg$ is the canonical epimorphism from  $\pi_1(M)$ to the quotient group
   then  $\pi_1(\hM)=\ker \fg$ and the group of deck transformations  is $\im \fg$.
   It can be identified to $\Z^\ell$, up to the choice of a basis.
   Since $H_1(M,\Z)$ is
    the abelianization of $\pi_1(M)$, such
  epimorphism $\fg$ factors  as $\fg=\ff\circ \fh$ with
    $\fg:\pi_1(M)\overset \fh \longrightarrow H_1(M,\Z)
  \overset{\ff}{\longrightarrow} \im \fg$.
  The linearization of $\ff$ gives a linear epimorphism
  $\ff:H_1(M,\re)\to \im \ff \approx \re^\ell$.  Loosely speaking, the elements  of $\im \ff$ can be interpreted as homology
  classes adapted to the cover $\hM$.  We denote by $\pi$ the covering projection of $\tM$ onto $\hM$.
  We record for later use that, given any norm on $H_1(M,\re)$,
  $H_1(M, \Z) \cap \ker \ff \approx \Z^{k-\ell}$  is $B$--dense in $\ker \ff \approx \re^{k-\ell}$  for a suitable constant $B$.
  Further, for any fixed $x_0 \in \tM$
\begin{equation}\label{ff}
    H_1(M, \Z) \cap \ker \ff= \{G(x)-G(x_0) \mid \pi(x)= \pi(x_0)\}.
\end{equation}
The metric on $\hM$, denoted by $\hd$, induced by the lift of the riemannian metric on $M$,  is given by
\[ \hd(x,y) = \inf \{d(z,w) \mid z \in \pi^{-1}(x), \, w \in \pi^{-1}(y)\}\]

\smallskip

The minimal action functional
  for the cover $\hM$ is
  $\hbe:\re^\ell=\im\ff \to \re$,
  \begin{equation}\label{hbe}
  \hbe(z) = \inf\{\, \be(h) \,|\, \ff(h)=z\,\}.
  \end{equation}
  This can also be interpreted as the average action of minimizing
  Euler-Lagrange orbits on $\hM$ with asymptotic direction $z$.
  The  effective Hamiltonian for $\hM$ is $\ov{H}=\hbe^*$, the convex dual
  of $\hbe$:
  \begin{align}\label{}
  \ov{H}(p) =\hbe^*(p) &= \max_{z\in\im \ff} p(z) - \hbe(z) \notag \\
  &= \max_{h\in H_1(M,\re)} p(\ff(h)) - \be(h) \notag \\
  &= \a(\ff^*(p)), \label{f*a}
  \end{align}
 where $\ff^*$ is the homomorphism $\ff^*:(\im \ff)^*\to H_1(M,\re)^*=H^1(M,\re)$
 induced by $\ff$.

 By \eqref{ff}
 \begin{equation}\label{abe1}
    \ff(G(y))= \ff(G(x)) \qquad\hbox{whenever $\pi(x)=\pi(y)$}.
\end{equation}
Then $\ff (G(\cdot))$ is the lift to $\tM$ of a map from $\hM$ to $\im \ff$ that we denote by $\hG$. We set, for any $\e >0$,
 $\hF_\e= \e \, \hG$.

\begin{Proposition}\label{SubcoverSpaces}
 $$
 \lim_\e(\hM,\e \hd ,\e \hF_\e)= \im \ff.
 $$
 \end{Proposition}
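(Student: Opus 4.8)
Exactly as for Proposition~\ref{Pspaces}, I would check the two defining conditions \eqref{c1} and \eqref{c2} of the convergence relation for the triple $(\hM,\e\hd,\hF_\e)$, with $\hF_\e=\e\hG$; the strategy is to reduce both to Proposition~\ref{Pspaces} via the identity $\hG\circ\pi=\ff\circ G$ --- which is exactly \eqref{abe1} --- and the description \eqref{ff} of the fibers of $\pi$. Since $\im\ff\approx\re^\ell$ is finite dimensional all norms on it are equivalent, and since the metric $\e\hd$ and the telescope $\hF_\e=\e\hG$ scale together in $\e$, condition \eqref{c1} amounts to the $\e$--independent quasi--isometry estimate
\[
K^{-1}\,\hd(x,y)-C\ \le\ \|\hG(x)-\hG(y)\|\ \le\ K\,\hd(x,y)\qquad(x,y\in\hM)
\]
for suitable $K>1$, $C\ge0$: multiplying by $\e$ and taking $A_\e=C\e\to0$ gives back \eqref{c1}. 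I will derive it from the analogous $\e$--free estimate $K_0^{-1}d(z,w)-C_0\le\|G(z)-G(w)\|\le K_0\,d(z,w)$ on $\tM$, which is what the proof of Proposition~\ref{Pspaces} actually establishes (equivalently, it is the Milnor--\v{S}varc lemma for the cover $\tM\to M$ with deck group $H_1(M,\Z)\approx\Z^k$).

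The upper bound and condition \eqref{c2} are the easy half. For the upper bound: for $x,y\in\hM$ and any lifts $z\in\pi^{-1}(x)$, $w\in\pi^{-1}(y)$, \eqref{abe1} gives $\hG(x)-\hG(y)=\ff(G(z)-G(w))$, hence $\|\hG(x)-\hG(y)\|\le\|\ff\|\,\|G(z)-G(w)\|\le\|\ff\|\,K_0\,d(z,w)$ (with $\|\ff\|$ the operator norm of the linear map $\ff$); taking the infimum over $z,w$ and using the definition of $\hd$ yields $\|\hG(x)-\hG(y)\|\le\|\ff\|K_0\,\hd(x,y)$. For \eqref{c2}: given $z\in\im\ff$ choose $h\in H_1(M,\re)$ with $\ff(h)=z$, apply condition \eqref{c2} of Proposition~\ref{Pspaces} to get a family $\tx_\e\in\tM$ with $\e\,G(\tx_\e)\to h$, and set $x_\e:=\pi(\tx_\e)$; then $\hF_\e(x_\e)=\e\,\hG(\pi(\tx_\e))=\ff(\e\,G(\tx_\e))\to\ff(h)=z$ by continuity of $\ff$.

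The core of the argument --- and the step I expect to be the real obstacle --- is the lower bound $\hd(x,y)\le K_2\|\hG(x)-\hG(y)\|+K_3$. Fix $x,y$, put $\zeta:=\hG(x)-\hG(y)$, and choose lifts $z_0\in\pi^{-1}(x)$, $w_0\in\pi^{-1}(y)$, so $\ff(G(z_0)-G(w_0))=\zeta$. By \eqref{ff} with base point $w_0$, as $w$ ranges over $\pi^{-1}(y)$ the vectors $G(w)-G(w_0)$ range over all of $H_1(M,\Z)\cap\ker\ff$, which is $B$--dense in $\ker\ff$. Fix a linear right inverse $s\colon\im\ff\to H_1(M,\re)$ of $\ff$; then $G(z_0)-G(w_0)-s(\zeta)\in\ker\ff$, so there is $\ga\in H_1(M,\Z)\cap\ker\ff$ with $\|G(z_0)-G(w_0)-s(\zeta)-\ga\|\le B$. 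Taking the lift $w\in\pi^{-1}(y)$ with $G(w)=G(w_0)+\ga$ gives $\|G(z_0)-G(w)\|\le\|s(\zeta)\|+B\le\|s\|\,\|\hG(x)-\hG(y)\|+B$. Feeding this into the $\e$--free lower bound on $\tM$, rewritten as $d(z_0,w)\le K_0\|G(z_0)-G(w)\|+K_0C_0$, and using $\hd(x,y)\le d(z_0,w)$, I get $\hd(x,y)\le K_0\|s\|\,\|\hG(x)-\hG(y)\|+K_0(B+C_0)$. Combining the two bounds, enlarging $K$ to make $K>1$, and taking $A_\e$ proportional to $\e$ establishes \eqref{c1}; with \eqref{c2} this proves $\lim_\e(\hM,\e\hd,\hF_\e)=\im\ff$. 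The delicate point is precisely this lower bound: for arbitrary lifts of $x$ and $y$ one controls only $\ff(G(z)-G(w))$, not $\|G(z)-G(w)\|$ itself, and the argument hinges on using the lattice \eqref{ff} and its cocompactness in $\ker\ff$ to choose lifts whose $G$--images are as close as the constraint $\ff(G(z)-G(w))=\zeta$ allows.
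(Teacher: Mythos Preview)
Your proof is correct and follows essentially the same route as the paper: reduce both conditions to Proposition~\ref{Pspaces} via $\hG\circ\pi=\ff\circ G$, get the upper bound from arbitrary lifts, get the lower bound by using the $B$--density of $H_1(M,\Z)\cap\ker\ff$ in $\ker\ff$ together with \eqref{ff} to choose lifts whose $G$--images are nearly optimal, and get condition~\eqref{c2} by pushing forward a sequence from $\tM$.

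The one cosmetic difference worth noting is the choice of norm on $\im\ff$. The paper equips $\im\ff$ with the quotient norm
\[
\|q\|_\ff=\min\{\|h\|:\ff(h)=q\},
\]
which automatically gives $\|\ff(h)\|_\ff\le\|h\|$ (so no operator norm $\|\ff\|$ appears in the upper bound) and, for the lower bound, replaces your linear section $s$ by the actual minimizer $h^*$ in the fiber $\ff^{-1}(\zeta)$ (so no $\|s\|$ appears either). Your use of a linear right inverse $s$ and operator norms is equivalent up to constants---all norms on $\im\ff\approx\re^\ell$ are comparable---but the quotient norm packages the ``best lift'' idea directly into the norm and yields slightly cleaner inequalities.
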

 The proof will be given in the next section. We proceed to give the statement
 of the homogenization result  for general abelian covers, the proof will be provided in Section \ref{subcovers}.

 \begin{Theorem}\quad\label{TB}

 Let $f_\e:\hM \to\re$, $f: \im \ff \to \re$ be continuous with $f$ of linear growth.
  Assume that
 $f_\e$  uniformly $\hF_\e$--converges to $f$.
 Let $\hH:T^*\hM\to\re$ be the
 lift of  a Tonelli Hamiltonian $H$, defined on $M$, to $\hM$.
 Let $v^\e$ be the viscosity solution to the
 problem
 \begin{gather}
\phantom{\qquad x\in \hM,\; t>0. } \partial_t v^\e + \hH(x,\tfrac 1\e\partial_x v^\e)
= 0, \qquad x\in \hM,\; t>0; \\
 v^\e(x,0) = f_\e(x).
 \end{gather}
 Then the family of functions $v^\e:\hM_\e\times[0,+\infty[\to\re$
 locally uniformly $\hF_\e$--converges in $ \hM \times ]0,+ \infty[$
to the solution $u:\im \ff\times[0,+\infty[\to\re$ of the problem
\begin{gather}
\phantom{\qquad q\in \im \ff,\; t>0. }
\partial_t u + \oH(\partial_q u) = 0, \qquad q\in \im \ff,\; t>0;
\label{lim1}
\\
u(q,0) = f(q);
\label{lim2}
\end{gather}
where the effective Hamiltonian $\oH:(\im \ff)^*\to\re$, is $\oH=\ff^*\a$ given by \eqref{f*a}.

\end{Theorem}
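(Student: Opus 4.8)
The plan is to reduce Theorem~\ref{TB} to Theorem~\ref{TA} by lifting the whole problem to the maximal free abelian cover $\tM$ along the covering projection $\pi:\tM\to\hM$, and then carrying the conclusion back down. Since $\pi$ is a Riemannian covering and $\tH$ is the lift of $\hH$ through $\pi$, the function $\tv^\e:=v^\e\circ\pi$ is a viscosity solution of
\[
\partial_t\tv^\e+\tH\bigl(x,\tfrac1\e\partial_x\tv^\e\bigr)=0\qquad\text{on }\tM\times\,]0,\infty[,
\]
with initial datum $\tv^\e(\cdot,0)=f_\e\circ\pi$, which is continuous and of linear growth (the latter because $f_\e$ is, being uniformly close to the linear-growth function $f\circ\hF_\e$), and by the comparison principle used in Theorem~\ref{TA} it is the unique such solution.

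First I would check that $f_\e\circ\pi$ uniformly $F_\e$--converges to $f\circ\ff:H_1(M,\re)\to\re$. By \eqref{abe1} and the definition of $\hG$ we have $\ff\circ G=\hG\circ\pi$, hence $\ff\circ F_\e=\hF_\e\circ\pi$; therefore, for every $x\in\tM$,
\[
\bigl|(f_\e\circ\pi)(x)-(f\circ\ff)(F_\e(x))\bigr|=\bigl|f_\e(\pi(x))-f\bigl(\hF_\e(\pi(x))\bigr)\bigr|\le\sup_{y\in\hM}\bigl|f_\e(y)-f\bigl(\hF_\e(y)\bigr)\bigr|,
\]
which tends to $0$ by the uniform $\hF_\e$--convergence of $f_\e$, while the local uniform part of the $F_\e$--convergence follows from the continuity of $\ff$ and $f$. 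As $\ff$ is linear, $f\circ\ff$ is continuous of at most linear growth, so Theorem~\ref{TA} applies: $\tv^\e$ locally uniformly $F_\e$--converges, on $\tM\times\,]0,\infty[$, to the viscosity solution $\tu$ of $\partial_t\tu+\a(\partial_h\tu)=0$ on $H_1(M,\re)\times[0,\infty[$ with $\tu(\cdot,0)=f\circ\ff$.

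Next I would establish that $\tu=u\circ\ff$. Both limit solutions are given by Lax formulas of the type \eqref{uoL}: the effective Lagrangian of $\a$ is Mather's $\be$, and that of $\oH=\ff^*\a=\hbe^*$ (cf.\ \eqref{f*a}) is $\hbe$ (here $\be$ and $\hbe$ are finite, convex and superlinear, so $\be^{**}=\be$ and $\hbe^{**}=\hbe$). Putting $h_0=h-tv$ and splitting the minimisation over $v\in H_1(M,\re)$ according to the value $w=\ff(v)\in\im\ff$, then using \eqref{hbe} and the substitution $w=(q-q_0)/t$ in the Lax formula for $u$,
\[
\tu(h,t)=\min_{v}\Bigl\{f\bigl(\ff(h)-t\,\ff(v)\bigr)+t\,\be(v)\Bigr\}=\min_{w\in\im\ff}\Bigl\{f\bigl(\ff(h)-tw\bigr)+t\,\hbe(w)\Bigr\}=u\bigl(\ff(h),t\bigr).
\]

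Finally I would transport the convergence down to $\hM$. Fix a linear section $s:\im\ff\to H_1(M,\re)$ of $\ff$, giving $H_1(M,\re)=s(\im\ff)\oplus\ker\ff$. Let $q\in\im\ff$, $t>0$ and let $y_n\in\hM$ be a subsequence with $\hF_{\e_n}(y_n)\to q$. Choose any lift $x_n^0\in\pi^{-1}(y_n)$ and write $G(x_n^0)=s(q_n)+r_n$ with $q_n=\ff(G(x_n^0))=\hG(y_n)$ and $r_n\in\ker\ff$. By \eqref{ff} the deck group of $\pi$ is the sublattice $H_1(M,\Z)\cap\ker\ff$, and it is $B$--dense in $\ker\ff$; choose a deck transformation $g_n$ with $|r_n+g_n|\le B$ and set $x_n:=g_n\cdot x_n^0\in\pi^{-1}(y_n)$, so that $G(x_n)=s(q_n)+(r_n+g_n)$. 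Then
\[
F_{\e_n}(x_n)=s(\e_n q_n)+\e_n(r_n+g_n)=s\bigl(\hF_{\e_n}(y_n)\bigr)+\e_n(r_n+g_n)\longrightarrow s(q),
\]
because $|\e_n(r_n+g_n)|\le\e_nB\to0$. Applying the conclusion of Theorem~\ref{TA} along this subsequence at the fixed time $t>0$ and using $\tu=u\circ\ff$, $\ff(s(q))=q$ and $\pi(x_n)=y_n$,
\[
v^{\e_n}(y_n,t)=\tv^{\e_n}(x_n,t)\longrightarrow\tu\bigl(s(q),t\bigr)=u\bigl(\ff(s(q)),t\bigr)=u(q,t),
\]
which is exactly the local uniform $\hF_\e$--convergence asserted in Theorem~\ref{TB}. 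I expect this last step to be the main obstacle: although $\hF_{\e_n}(y_n)$ converges, an arbitrary lift $x_n^0$ need not make $F_{\e_n}(x_n^0)$ converge, since its $\ker\ff$--component is determined only modulo the rank--$(k-\ell)$ sublattice $H_1(M,\Z)\cap\ker\ff$; the point is that this sublattice, though coarse, is $B$--dense, so the lift can be corrected to force convergence of $F_{\e_n}(x_n)$ while the $O(\e_n)$ correction disappears in the limit. This is where \eqref{ff} and the $B$--density recorded before Proposition~\ref{SubcoverSpaces} enter; the remaining steps are bookkeeping with the definitions of $F_\e$--convergence, together with the elementary convex-analytic fact that $\hbe(z)=\inf_{\ff(h)=z}\be(h)$, which is exactly \eqref{hbe}.
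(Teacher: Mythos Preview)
Your proof is correct and follows essentially the same route as the paper's: lift everything along $\pi:\tM\to\hM$, check that the lifted data $f_\e\circ\pi$ uniformly $F_\e$--converge to $f\circ\ff$, apply Theorem~\ref{TA}, identify the limit $\tu$ with $u\circ\ff$ via the Lax formula and \eqref{hbe}, and finally use the $B$--density of $H_1(M,\Z)\cap\ker\ff$ in $\ker\ff$ to choose lifts $x_n\in\pi^{-1}(y_n)$ with $F_{\e_n}(x_n)$ convergent (the paper does this by a compactness argument, you by an explicit linear section~$s$; both work). The only slip is that in the last step you fix $t$, whereas the local uniform convergence on $\hM\times\,]0,+\infty[$ requires you to take $(y_n,t_n)$ with $t_n\to t$ as well; since Theorem~\ref{TA} already handles varying $t_\e$, the fix is to carry $t_n$ through unchanged.
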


 \section{Convergence of Spaces}\label{spaces}

In this section we provide proofs for Propositions \ref{Pspaces}, \ref{SubcoverSpaces},
and show that some basic  properties
holding for usual uniform convergences are still true in our setting.

\smallskip

 \noindent {\bf Proof of proposition~\ref{Pspaces}:}

 Observe that for the finite dimensional space $H_1(M,\re)$ we can use any norm $\|\cdot\|$.

 If $\om=\sum_ia_i \,\om_i$  and $\lV \om\rV:=\sup_{x\in M}|\om(x)|$,
 \begin{align*}
 \Big\vert \big[G(x)-G(y)\big]\cdot [\om]\Big\vert = \lv\oint_x^y\widetilde\om\; \rv
 \le \lV\om\rV\, d(x,y).
 \end{align*}
 Then there is $K_0> 0$ such that
 \begin{equation}\label{GLip}
 |G(x)-G(y)|\le {K_0}\; d(x,y),
 \end{equation}
 and using that $F_\e= \e\,G$ and $d_\e=\e\,d$, we have that
  \begin{equation}\label{FLip}
 \| F_\e(x)-F_\e(y)\| \le K_0\;d_\e(x,y).
 \end{equation}

 Write $\G=\im [\pi_1(M)\to H_1(M,\re)]$, the group of covering transformations of $\tM$.
 Fix $x_0\in \tM$.
 Let $e_1,\ldots,e_k$ be a basis of $\G$ and let $\hga_i$
 be a minimal geodesic from $x_0$ to $e_i(x_0)=:x_0+e_i$.
 If $\pi:\tM\to M$ is the projection, the concatenation
 $(\pi\circ\hga_1)^{n_1}*\cdots*(\pi\circ\hga_k)^{n_k}$ lifts to
 a curve from $x_0$ to $x_0+\on$, where $\on = \sum n_i e_i$.
 Let  $\ell_i$ be the length of $\ga_i$. Then
 \begin{align*}
 d(x_0,x_0+\on)\le
 \textstyle{\sum_i}\; n_i \,\ell_i
 \le (\max_i \ell_i) \,k\, \|\on\|
 =:K_2\,\|\on\|.
 \end{align*}
 For any $\on, \oom\in \G\approx \Z^k$, we have
 \begin{align*}
 d(x_0+\on,x_0+\oom) = d(x_0,x_0+(\on-\oom))
 \le K_2\, \|\on-\oom\|.
 \end{align*}

 If $x, y\in \tM$ there are two elements $x_0+\on$, $x_0+\oom$ of the orbit of $x_0$ such that
 \linebreak
 $d(x,x_0+\on)\le D$ and $d(y,x_0+\oom) \le D$, where $D:=\diam M$.
 We have that
 \begin{align*}
 d(x,y)&\le d(x,x_0+\on)+d(x_0+\on,x_0+\oom)+d(x_0+\oom,y)
 \\
 &\le K_2\, \|\on-\oom\| + 2D.
 \end{align*}

 Observe that
 \begin{equation}\label{DG}
 G(x_0+\oom)-G(x_0+\on) = \oom-\on \in H_1(M,\re).
 \end{equation}
 Using the Lipschitz property \eqref{GLip} for $G$,
 \begin{align*}
 \lv G(x)-G(y)\rv &\ge \|\oom-\on\| - \|G(x) -G(x_0+\on)\| -\|G(y)-G(x_0+\oom)\|
 \\
 &\ge \|\oom-\on\| - 2\, K_0 \, D.
 \end{align*}

 Therefore
 $$
 d(x,y) \le K_2\,\|G(x)-G(y)\| + 2\, K_0\, K_2 \,D + 2D.
 $$
 For $A:= 2 K_0 D + 2 K_2^{-1} D$,
 $$
 \forall x,y \in \tM, \qquad  K_2^{-1} \, d(x,y) - A \le \|G(x)-G(y)\|.
 $$
 Multiplying the inequality by $\e$, we get
  \begin{equation}\label{FLow}
 \forall x,y \in \tM, \qquad  K_2^{-1} \, d_\e(x,y) - \e\,A \le \|F_\e(x)-F_\e(y)\|.
 \end{equation}
 Inequalities \eqref{FLip} and \eqref{FLow} prove condition \eqref{c1} of the convergence.

 Condition \eqref{c2} follows from the fact that the image of the $\G$-orbit of $x_0$,
 $$
 F_\e(x_0+\G) = \e \G = \e \Z^k \subset \re^k = H_1(M,\re),
 $$
 is $\e$-dense in $H_1(M,\re)$.
\qed

\medskip

\noindent {\bf Proof of proposition~\ref{SubcoverSpaces}:}

 We endow $\im \ff$ with
the norm $\|\cdot\|_\ff$ defined  as
\[ \|q\|_\ff = \min\{\|h\| \mid h  \in H_1(M,\re) \;\;\hbox{with}
\;\; \ff(h)= q\},\]
where $\|\cdot\|$  is any norm for $H_1(M,\re)$.

Let $x$, $y$ be in $\hM$ and $z$, $w$ any element in
$\pi^{-1}(x)$, $\pi^{-1}(y)$, respectively. Exploiting the
convergence  proved in Proposition \ref{Pspaces}, we have
\[
    \|\hG(x)-\hG(y) \|_\ff = \|\ff( G(z) - G(w)) \|_\ff \leq \| G(z) - G(w)\| \leq k \, d(z,w) ,
\]
which implies, by  arbitrariness of $z$, $w$
\begin{equation}\label{subco1}
    \|\hG(x)-\hG(y) \|_\ff \leq  k \, \hd(x,y).
\end{equation}

By $B$--density of $H_1(M,\Z) \cap \ker \ff$ in $\ker \ff$ and \eqref{ff}, we find $z \in \pi^{-1}(x)$, $w \in \pi^{-1}(y)$ with
\[\|\hG(x)-\hG(y) \|_\ff \geq \|G(z)-G(w)\| - B\]
from this we derive, again using Proposition \ref{Pspaces}
\[\|\hG(x) - \hG(y) \|_\ff \geq  k^{-1} \, d(z,w) - A -B  \geq k^{-1} \, \hd(y,x)  - A  -B. \]
The last inequality together with \eqref{subco1} gives, up to multiplying by $\e$, the first property of the convergence. Given
 $q = \ff(h) \in \im \ff$, we know that there is a sequence $x_\e$ in $\tM$ with $\e \, G(x_\e)$ converging to $h$, therefore
 \[ q = \lim_\e \e \, \ff( G(x_\e)) = \lim_\e \e \, \hG(\pi(x_\e)).\]
 This concludes the proof.
\qed

\smallskip

We go back to the abstract setting considering general metric spaces $\cM$, $\cM_n$,  we will assume from now on
the limit space $\cM$ to be locally compact, namely we require
\begin{enumerate}
  \item[]  All the metric balls of $\cM$ are relatively compact.
\end{enumerate}
This will fit the frame of homogenization.
To avoid pathological cases, we also make clear that the compact subsets  of $\cM$ we will consider in what follows,
usually denoted by $\K$, are understood,
without further mentioning, to be with nonempty interior. This will guarantee that $F_n^{-1}(\K) \neq \emptyset$, at least for
$n$ large, thanks to \eqref{ball}.

If $\lim_n(\cM_n,\md_n,F_n)= (\cM,\md)$ and $f_n: \cM_n\to \re$, we say that the family  $f_n$ is
{\it $F_n$--locally equicontinuous} if for any
compact subset  $\K$ of $\cM$ and any $ \e >0$, there exists  $\de=\de(\e,\K)>0$
   such that
   $$ x,y\in F_n^{-1}(\K),\quad
   \md_n(x,y)<\de\quad  \then \quad \lv f_n(x)-f_n(y)\rv<\e.
   $$

\smallskip

In accord to what  the term uniform suggests, continuity is stable   under the locally uniform $F_n$--convergence.

\begin{Proposition} \label{ascoli1}
Assume $\lim_n(\cM_n,\md_n,F_n)= (\cM,\md)$, and take $f_n: \cM_n\to \re$, $f:\cM\to\re$ with $f_n$ locally uniformly
$F_n$--convergent to $f$. If all the $f_n$ are continuous then $f$ is continuous.
\end{Proposition}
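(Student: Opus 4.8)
The plan is to prove the sequential continuity of $f$ directly, by a diagonal construction that feeds back into the very definition of locally uniform $F_n$--convergence. So I would fix $x \in \cM$ and an arbitrary sequence $(x^j)_{j}$ in $\cM$ with $x^j \to x$, and aim to show $f(x^j) \to f(x)$; since $\cM$ is a metric space, this is precisely what continuity of $f$ means.

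First, for each fixed index $j$, I would use condition \eqref{c2} of the convergence to produce a sequence $(x^j_n)_n$, with $x^j_n \in \cM_n$, such that $\lim_n F_n(x^j_n) = x^j$. Since $f_n$ locally uniformly $F_n$--converges to $f$, applying the definition at the point $x^j$ to this (full) sequence gives $\lim_n f_n(x^j_n) = f(x^j)$. Hence I can select indices $n_j$, which I take strictly increasing in $j$, such that simultaneously
\[
 \md\big(F_{n_j}(x^j_{n_j}),\, x^j\big) < \tfrac1j
 \qquad\text{and}\qquad
 \big|\, f_{n_j}(x^j_{n_j}) - f(x^j)\,\big| < \tfrac1j .
\]

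Next I would set $z_j := x^j_{n_j} \in \cM_{n_j}$ and note, via the triangle inequality $\md(F_{n_j}(z_j),x) \le \md(F_{n_j}(z_j),x^j) + \md(x^j,x)$ together with $x^j\to x$, that $F_{n_j}(z_j) \to x$ along the subsequence $(n_j)$. Reapplying the definition of locally uniform $F_n$--convergence --- now at the point $x$ and along the subsequence $(n_j)$, with the elements $z_j$ --- yields $f_{n_j}(z_j) \to f(x)$. Combining this with the second estimate above gives $|f(x^j)-f(x)| \le \tfrac1j + |f_{n_j}(z_j)-f(x)| \to 0$, which is the desired conclusion.

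The only point requiring care --- and the main, rather mild, obstacle --- is the diagonal extraction: one must be able to choose the $n_j$ strictly increasing, so that $(z_j)$ is genuinely indexed along a subsequence of $(\cM_n)_n$, which is exactly the form to which the locally-uniform-convergence hypothesis applies; this is harmless because, for each fixed $j$, both limits used in the previous paragraph are taken as $n\to\infty$. I would also remark that the continuity of the individual $f_n$ is in fact not used in this argument --- only conditions \eqref{c1}, \eqref{c2} and the definition of locally uniform $F_n$--convergence enter --- so the statement could be phrased without that hypothesis.
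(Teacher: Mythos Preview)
Your proof is correct. The diagonal construction is clean: for each $j$ you approximate $x^j$ by some $F_{n_j}(z_j)$ well enough that the sequence $(z_j)$, indexed along a strictly increasing subsequence of $(n)$, satisfies $F_{n_j}(z_j)\to x$; the definition of locally uniform $F_n$--convergence, applied once at each $x^j$ and once at $x$, then closes the loop. Your remark that the continuity of the $f_n$ is nowhere used is also correct --- in fact condition~\eqref{c1} is not explicitly invoked either, only~\eqref{c2} and the convergence hypothesis.

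The paper proceeds differently. It first argues that the family $(f_n)$ is $F_n$--locally equicontinuous --- this is where the continuity of each $f_n$ enters, to exclude the possibility that the failure of equicontinuity occurs entirely at a single fixed index~$n$ --- and then transfers an $\e$--$\de$ modulus from the $f_n$ to $f$, obtaining uniform continuity of $f$ on each compact set. Your argument is more direct and dispenses with a hypothesis, but the paper's detour is not idle: the equicontinuity claim established there is reused verbatim as one half of Proposition~\ref{ascoli2}. So the trade--off is that your proof is the minimal one for the statement as written, while the paper's proof simultaneously builds an intermediate result it needs again immediately afterwards.
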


\begin{proof} We claim that the $f_n$ are $F_n$--locally equicontinuous. In fact, if this were not true there should be,
taking into account that all the $f_n$ are continuous, for a
 given compact $\K$ in $\cM$ and $\de >0$, (sub)sequences $z_n \in F_n^{-1}(\K)$, $y_n \in F_n^{-1}(\K)$  with
\begin{equation}\label{ascoli11}
    \md_n(z_n,y_n) \to 0 \quad\hbox{and} \quad |f_n(y_n)-f_n(z_n)| > \de.
\end{equation}
By condition \eqref{c1}  in the definition of spaces convergence $\md(F_n(x_n),F_n(z_n)) \to 0$, and so,
 owing to  compactness of $\K$, $F_n(x_n)$ and $F_n(z_n)$ converge, up to subsequences, to the same element, say $x$,
 of $\cM$. By the very    definition of local uniform convergence, we deduce
 $\lim_n f_n(x_n)=\lim_n f_n(z_n)=f(x)$, which is in contrast with \eqref{ascoli11}.

 We proceed proving that $f$ is uniformly continuous in $\K$.
Given $\e>0$ let $\de=\de(\e, \K)>0$ be such that
$$
\forall  n,\quad \forall y, z \in F_n^{-1}(\K),\quad
\md_n(y,z)<\de \then |f_n(y)-f_n(z)|<\e.
$$
 Let $x_0,x_1\in \cM$ with $\md(x_0,x_1)< \frac \de K$, where $K$ is the constant appearing in condition \eqref{c1}
 of the definition of spaces
 convergence. Let $z_n, y_n\in \cM_n$ with $\lim_n F_n(z_n)=x_0$, $\lim_n F_n(y_n)=x_1$, then
$\lim_n f_n(z_n)=f(x_0)$, $\lim_n f_n(y_n)=f(x_1)$
and
$$
\md_n(z_n,y_n) \le K \, \md(F_n(z_n),F_n(y_n)) + A_n\, K
\overset{n}\longrightarrow K \, \md(x_0,x_1)<\de.
$$
Thus
$$
|f_n(z_n)-f_n(y_n)|<\e.
$$
Taking $\limsup_n$ on the inequality
$$
|f(x_0)-f(x_1)| \le |f(x_0)-f_n(z_n)|+|f_n(z_n)-f_n(y_n)|+|f_n(y_n)-f(x_1)|,
$$
we obtain that
$$
|f(x_0)-f(x_1)|\le\e,
$$ as desired.
\end{proof}

 If $\lim_n(\cM_n,d_n,F_n)=(\cM,d)$, we say that a family of functions
 $f_n:(\cM_n,d_n)\to\re$ {\it converges pointwise} to $f:(\cM,d)\to\re$ if
 for every $x\in \cM$ there are sequences $x_n\in \cM_n$ with $\lim_n F_n(x_n)=x$
 and $\lim_n f_n(x_n) = f(x)$.

We proceed deriving an  result linking equicontinuity and local uniform convergence.

\begin{Proposition} \label{ascoli2}  Assume  $\lim_n(\cM_n,d_n,F_n)=(\cM,d)$,  take  $f:(\cM,d)\to\re$ and continuous  functions
 $f_n:(\cM_n,d_n)\to\re$. The family $f_n$ locally uniformly $F_n$--converges to $f$ if and only if it is
 pointwise convergent and  $F_n$--locally equicontinuous. \end{Proposition}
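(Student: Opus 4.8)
The plan is to prove both implications separately, using Proposition \ref{ascoli1} (and its proof) for the direction that local uniform convergence yields equicontinuity.

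\emph{Local uniform convergence $\Rightarrow$ pointwise convergence and $F_n$--local equicontinuity.} Pointwise convergence is immediate: given $x\in\cM$, condition \eqref{c2} furnishes a sequence $x_n\in\cM_n$ with $F_n(x_n)\to x$, and then the definition of local uniform $F_n$--convergence gives $f_n(x_n)\to f(x)$. For $F_n$--local equicontinuity, I would simply invoke the argument already carried out in the proof of Proposition \ref{ascoli1}: if equicontinuity failed, there would be a compact $\K\subset\cM$, a $\de>0$, and subsequences $z_n,y_n\in F_n^{-1}(\K)$ with $\md_n(z_n,y_n)\to 0$ but $|f_n(y_n)-f_n(z_n)|>\de$; by \eqref{c1} one gets $\md(F_n(z_n),F_n(y_n))\to 0$, so by compactness of $\K$ both $F_n(z_n)$ and $F_n(y_n)$ converge along a common subsequence to the same point $x\in\cM$; local uniform convergence then forces $f_n(z_n)\to f(x)$ and $f_n(y_n)\to f(x)$, contradicting the separation by $\de$. (Note that here I do not even need the $f_n$ to be continuous for this direction, since the bad sequences are given directly; continuity was only used in Proposition \ref{ascoli1} to produce the bad sequences from a failure of equicontinuity phrased at fixed points.)

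\emph{Pointwise convergence and $F_n$--local equicontinuity $\Rightarrow$ local uniform convergence.} Fix $x\in\cM$ and a subsequence $y_{n_k}\in\cM_{n_k}$ with $F_{n_k}(y_{n_k})\to x$; I must show $f_{n_k}(y_{n_k})\to f(x)$. Choose a relatively compact metric ball $\K$ of $\cM$ whose interior contains $x$; then for $k$ large $y_{n_k}\in F_{n_k}^{-1}(\K)$. Given $\e>0$, let $\de=\de(\e,\K)>0$ be the $F_n$--local equicontinuity modulus. By pointwise convergence pick a sequence $x_n\in\cM_n$ with $F_n(x_n)\to x$ and $f_n(x_n)\to f(x)$; for $k$ large, $x_{n_k}\in F_{n_k}^{-1}(\K)$ as well. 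Now estimate, using \eqref{c1},
\[
\md_{n_k}(x_{n_k},y_{n_k}) \le K\,\md\big(F_{n_k}(x_{n_k}),F_{n_k}(y_{n_k})\big) + A_{n_k}K \;\xrightarrow[k\to\infty]{}\; 0,
\]
so for $k$ large $\md_{n_k}(x_{n_k},y_{n_k})<\de$, hence $|f_{n_k}(y_{n_k})-f_{n_k}(x_{n_k})|<\e$. Combining with $f_{n_k}(x_{n_k})\to f(x)$ gives $\limsup_k|f_{n_k}(y_{n_k})-f(x)|\le\e$, and since $\e$ was arbitrary, $f_{n_k}(y_{n_k})\to f(x)$, which is exactly local uniform $F_n$--convergence.

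\emph{Main obstacle.} There is no deep difficulty; the only point requiring a little care is the bookkeeping with the compact set $\K$ in the second implication — one must make sure that both the reference sequence $x_n$ (from pointwise convergence) and the test sequence $y_{n_k}$ eventually lie in $F_n^{-1}(\K)$ so that the single equicontinuity modulus $\de(\e,\K)$ applies to the pair. This is handled by choosing $\K$ with $x$ in its interior (the standing convention that compact sets have nonempty interior, together with \eqref{ball}, guarantees $F_n^{-1}(\K)\ne\emptyset$) and using that $F_{n_k}(x_{n_k}),F_{n_k}(y_{n_k})\to x\in\interior\K$.
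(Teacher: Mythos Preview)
Your proof is correct and follows essentially the same approach as the paper: both directions match the paper's argument (invoking the equicontinuity claim from the proof of Proposition~\ref{ascoli1} for the forward implication, and for the converse picking a reference sequence via pointwise convergence and using condition~\eqref{c1} together with equicontinuity to transfer the limit to the test sequence). Your bookkeeping with the compact set $\K$ is slightly more explicit than the paper's terse ``for a suitable compact subset $\K \subset \cM$'', but the arguments are otherwise identical.
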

 \begin{proof} The implication (local uniform convergence) $\Rightarrow$ (equicontinuity) has already
 been proved in Proposition
  \ref{ascoli1},  pointwise convergence can  also be  trivially  derived. This shows one half of the statement.
  Conversely, assume
  that a subsequence, still indexed by $n$, $y_n \in \cM_n$ satisfies $F_n(y_n)=x_0$, for some $x_0 \in \cM$,
  by pointwise convergence there
  is $z_n \in \cM_n$ with $\lim_n F_n(z_n)=x_0$, $\lim_n f_n(z_n)=f(x_0)$, therefore
  $x_n$, $z_n$ belong to $F_n^{-1}(\K)$,
  for a suitable compact subset $\K \subset \cM$ and any $n$, in addition
  \[ \md_n(y_n,z_n) \leq K \,  \md(F_n(y_n),F_n(z_n)) + K \, A_n \overset n \longrightarrow 0 \]
 so that,  by local equicontinuity
  \[ \lim_n |f_n(z_n)-f_n(y_n)| =0,\]
  which in the end implies
  \[ \lim_nf_n(y_n)=f(x_0),\]
  as desired.

 \end{proof}

 Next proposition put in relation local global and  uniform $F_n$--convergence.

 \begin{Proposition}\label{ascoli3} Assume  $\lim_n(\cM_n,d_n,F_n)=(\cM,d)$,  let $f_n:(\cM_n,d_n)\to\re$ be   continuous. If $f_n$
 locally uniformly $F_n$--converges to some function $f:(\cM,d)\to\re$ then
 \begin{equation}\label{ascoli31}
 \lim_n \sup_{x\in F_n^{-1}(\K)} \lv f_n(x) -f(F_n(x))\rv =0 \quad\hbox{for any compact subset $\K$ of $\cM$.}
\end{equation}
Conversely, if \eqref{ascoli31} holds and $f$ is continuous then  $f_n$ locally uniformly $F_n$--converges to $f$.
 \end{Proposition}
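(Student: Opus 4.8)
The plan is to establish each implication by an elementary subsequence argument, using the local compactness of $\cM$ and Proposition \ref{ascoli1}.

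For the forward implication I would argue by contradiction. Assume $f_n$ locally uniformly $F_n$--converges to $f$ but \eqref{ascoli31} fails for some compact $\K\subset\cM$. Then there are $\de>0$, a subsequence (which I keep denoting by $n$), and points $x_n\in F_n^{-1}(\K)$ with $\lv f_n(x_n)-f(F_n(x_n))\rv>\de$ for every $n$. Since $\K$ is compact, after passing to a further subsequence $F_n(x_n)\to x$ for some $x\in\K$. By the very definition of local uniform $F_n$--convergence this forces $f_n(x_n)\to f(x)$; moreover $f$ is continuous, being the local uniform $F_n$--limit of the continuous functions $f_n$ (Proposition \ref{ascoli1}), so also $f(F_n(x_n))\to f(x)$. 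Hence $\lv f_n(x_n)-f(F_n(x_n))\rv\to 0$, contradicting the choice of the $x_n$.

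For the converse I would proceed directly. Assume \eqref{ascoli31} holds and $f$ is continuous, fix $x\in\cM$, and take any subsequence $y_{n_k}\in\cM_{n_k}$ with $F_{n_k}(y_{n_k})\to x$. Choosing a closed metric ball $\K$ centered at $x$ — compact by local compactness of $\cM$, with nonempty interior — we have $F_{n_k}(y_{n_k})\in\K$, hence $y_{n_k}\in F_{n_k}^{-1}(\K)$, for all large $k$. Then
\[
\lv f_{n_k}(y_{n_k})-f(x)\rv\le \sup_{z\in F_{n_k}^{-1}(\K)}\lv f_{n_k}(z)-f(F_{n_k}(z))\rv+\lv f(F_{n_k}(y_{n_k}))-f(x)\rv,
\]
where the first term goes to $0$ by \eqref{ascoli31} and the second by continuity of $f$ together with $F_{n_k}(y_{n_k})\to x$. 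Thus $f_{n_k}(y_{n_k})\to f(x)$, which is precisely local uniform $F_n$--convergence.

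I do not expect a genuine obstacle here: both directions reduce to extracting a convergent subsequence from a sequence confined to a compact set (respectively, to having compact closed balls), so the only structural ingredient beyond routine $\e/2$--splitting is the local compactness hypothesis on $\cM$ and the already-proven Proposition \ref{ascoli1}. The one point demanding care is the subsequence bookkeeping, since ``locally uniformly $F_n$--converges'' is itself defined through arbitrary subsequences, so I must make sure that the sub-subsequences extracted above are fed back into that definition consistently.
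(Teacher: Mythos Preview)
Your argument is correct and follows essentially the same route as the paper: a contradiction/subsequence argument for the forward implication invoking Proposition~\ref{ascoli1}, and for the converse the observation that the sequence eventually lies in $F_{n_k}^{-1}(\K)$ for some compact $\K$, after which \eqref{ascoli31} and continuity of $f$ finish. The only cosmetic differences are that you make the triangle-inequality split and the choice of $\K$ as a closed ball explicit, whereas the paper leaves these implicit.
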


 \begin{proof} First, assume $f_n$ locally uniformly convergent to $f$. If \eqref{ascoli31} were not true, there should be a
 compact subset of $\K \subset \cM$, $\de >0$, and a (sub)sequence $y_n \in F_n^{-1}(\K)$ with
 \begin{equation}\label{ascoli32}
    |f_n(y_n)- f(F_n(y_n))| > \de
\end{equation}
 Since $F_n(y_n)$ converges, up to subsequences, to some $x \in \K$, we get  by local uniform convergence
 $\lim_n f_n(y_n)= f(x)$, being $f$ continuous by Proposition \ref{ascoli1}, we also have $\lim_n f(F_n(y_n))= f(x)$.
 These two limit relations are in contrast with \eqref{ascoli32}.

  Conversely, if \eqref{ascoli31} holds and a (sub)sequence $F_n(y_n) \in \cM_n$ converges to $x \in \cM$, then
  $y_n \in F_n^{-1}(\K)$, for some compact subset of $\cM$ and
  \begin{equation}\label{ascoli33}
    |f_n(y_n)- f(F_n(y_n))| \longrightarrow 0
\end{equation}
  the fact that  $f$ is continuous by assumption  implies  $\lim_n f(F_n(y_n))= f(x)$, this last relation,
   combined with \eqref{ascoli33}, yields
  \[ \lim_n f_n(y_n)= f(x).\]
\end{proof}


 \section{Homogenization in the maximal free abelian cover}\label{STA}
Write
 \begin{align*}
 H_\e(x,p) :&= \tH\big(x,\tfrac 1\e p\big). \\
 L_\e(x,v) :&= \max_{p\in T^*_xM}\left\{\, p\cdot v - H_\e(x,p)\, \right\}\\
 &= \max_{p\in T^*_xM} \left\{\frac p\e\cdot(v\e) - \tH\big(x,\tfrac p\e\big)\right\} \\
 &= L(x,\e v).
 \end{align*}
 The solution to the problem \eqref{vve1}--\eqref{vve2} is given by the Lax-Oleinik formula
 \begin{align*}
 v^\e(x,t)&=\inf \left\{\, f_\e(\ga(0))+\int_0^t L_\e(\ga,\dot\ga)\;dt\;\Big|\;
  \ga\in C^1([0,t],\tM),\,
 \ga(t)=x\;\right\} ,\\
 &=\inf_{\ga(t)=x}\left\{\; f_\e(\ga(0))+\int_0^t L(\ga,\e\,\dga)\;\right\}.
 \end{align*}

 Write $\eta:[0,\tfrac t\e]\to \tM$, $\eta(s):=\ga(\e s)$. Then
 \begin{align}
 \int_0^t L(\ga,\e\dga)\;dt &= \int_0^{\frac t\e} L(\eta(s),\deta(s))\;\e \, ds.
 \\
 v^\e(x,T)&=\inf \left\{\,f_\e(\eta(0))+
 \e\int_0^{\frac t\e} L(\eta,\deta)\; ds\,
 \Big|\;\eta\in C^1([0,\tfrac t\e],\tM), \;\,
 \eta\big(\tfrac t\e\big)=x\;
  \right\},
  \notag
  \\
 &=\inf _{y\in \tM} \left\{ f_\e(y)+\e\,\tphi\big(y,x,\tfrac t\e\big)\,\right\}
 \label{vef}
 \end{align}
 where
 \begin{align*}
 \tphi(y,x,t):&= \inf
 \left\{ \,\int_0^{t} L(\eta,\deta)\; ds\,
 \Big|\;\eta\in C^1\big([0,S],\tM\big), \;\,\eta(0)=y,\;
 \eta(t)=x\;
  \right\}.
 \end{align*}

 The solution to the limit problem~\eqref{uue1}--\eqref{uue2} is
 \begin{equation}\label{uf}
  u(h,t) = \inf_{q\in H_1(M,\re)}\left\{ f(q)+t\,\be\left(\tfrac{h-q}t\right)\right\},
 \end{equation}
 where $\be$ is Mather's minimal action functional~\eqref{beta}. Optimal elements for $u(h,t)$ do exists because $f$
 is continuous by Proposition \ref{ascoli1} and with linear growth, $\be$ superlinear.

\smallskip

 The proof is just to show that formula~\eqref{vef} converges to formula~\eqref{uf},
 using Mather's proposition \ref{MatP} below on the uniform convergence of
mean minimal  actions to the beta function. See \cite[Corollary on
page 181]{Mat5}, we have just slightly changed notations:

\smallskip

\begin{Proposition}\quad\label{MatP}
For every $A>0$, $\de>0$ there is $T_0>0$ such that
 \\
  if
 \quad $x,\, y\in \tM$, \quad $T\ge T_0$, \quad $\lV\tfrac {G(y)-G(x)}T\rV\le A$, \quad then
 $$
 \lv \tfrac 1T\, \tphi(x,y,T) - \be\left(\tfrac{G(y)-G(x)}T\right)\rv < \de.
 $$
\end{Proposition}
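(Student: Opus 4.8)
The statement is Mather's \cite[Corollary on p.~181]{Mat5} rephrased in our notation, so in the paper it will simply be quoted; but let me indicate how I would prove it. Recall that $\be=\a^*$ and $\a=\be^*$ are finite, continuous and superlinear, and that $\tfrac1T\,\tphi(x,y,T)$ is the mean minimal action of paths in $\tM$ joining $x$ to $y$ in time $T$ — equivalently, of paths in $M$ from $\pi(x)$ to $\pi(y)$ in time $T$ that lift to paths from $x$ to $y$, so that $G(y)-G(x)$ records their ``winding''. Thus the statement is a uniform convergence of minimal actions with prescribed asymptotic homology, and the plan is to establish, writing $h:=\tfrac{G(y)-G(x)}{T}$ (so $\|h\|\le A$), the two bounds $\tfrac1T\tphi(x,y,T)\ge\be(h)-\de$ and $\tfrac1T\tphi(x,y,T)\le\be(h)+\de$ for all $T\ge T_0(A,\de)$.

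For the lower bound I would use weak KAM duality. Choose $c\in H^1(M,\re)$ in the subdifferential of $\be$ at $h$; since $\be$ is finite and convex, hence locally Lipschitz, $c$ lies in a ball of radius $R(A)$, and $\be(h)=\langle c,h\rangle-\a(c)$. Pick a closed $1$-form $\om_c$ of class $c$ and a Lipschitz critical subsolution $u_c$ of $H(x,du_c+\om_c)\le\a(c)$ (Fathi's weak KAM theorem \cite{Fa1}); standard a priori estimates make the family $\{u_c:\|c\|\le R(A)\}$ equi-Lipschitz, so $u_c\circ\pi$ has oscillation at most $C_1(A)$ on $\tM$. Fenchel's inequality gives $L(x,v)\ge (du_c+\om_c)(v)-\a(c)$; integrating this along any $C^1$ path $\eta:[0,T]\to\tM$ from $x$ to $y$, and using $\int_\eta(\text{lift of }\om_c)=\langle c,G(y)-G(x)\rangle$ together with $\int_\eta d(u_c\circ\pi)=u_c(\pi y)-u_c(\pi x)$, one obtains $\int_0^T L(\eta,\deta)\,ds\ge\langle c,G(y)-G(x)\rangle-\a(c)\,T-C_1(A)$; taking the infimum over $\eta$ and dividing by $T$ gives $\tfrac1T\tphi(x,y,T)\ge\be(h)-C_1(A)/T$, which is $\ge\be(h)-\de$ once $T\ge C_1(A)/\de$.

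For the upper bound I would build a near-optimal competitor. Take a near-minimizing measure $\mu_h$, invariant under the Euler--Lagrange flow, with $\rho(\mu_h)=h$ and $\int L\,d\mu_h<\be(h)+\e$; decomposing it into finitely many ergodic components and applying Birkhoff's theorem to each, one obtains, for $T_1$ large, a $C^1$ curve $\ga:[0,T_1]\to M$ — a concatenation of finitely many genuine Euler--Lagrange orbit segments joined by short arcs of bounded length and action, with corners rounded — such that $\tfrac1{T_1}\int_0^{T_1}L(\ga,\dga)\le\be(h)+\e$ and the lift of $\ga$ to $\tM$ has $G$-increment within $\e T_1$ of $T_1 h$. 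I would then lift $\ga$ starting from a lift of $\ga(0)$ within distance $\diam M$ of $x$ (a first connector of duration $\le\diam M$ and bounded action), landing at some point $z$ with $\|(G(z)-G(x))-T_1h\|\le\e T_1+O(1)$, and join $z$ to $y$ by a last connector of duration $\tau_3=O(\e T)+O(1)$ chosen large enough that the homology rate $\|(G(y)-G(z))/\tau_3\|$ it must realize stays $\le 2A$; then, by the quasi-isometry $d(\cdot,\cdot)\le K_2\|G(\cdot)-G(\cdot)\|+O(1)$ obtained in the proof of Proposition~\ref{Pspaces}, this connector admits a competitor of speed $O(1)$ and hence action $\le C(A)\,\tau_3=O(\e T)+O(1)$. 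Setting $T_1:=T-\diam M-\tau_3$ and adding everything up, $\tphi(x,y,T)\le(\be(h)+\e)T_1+O(\e T)+O(1)\le\be(h)\,T+O(\e)\,T+O(1)$ (the last step handling both signs of $\be(h)+\e$ via $|T-T_1|=O(\e T)+O(1)$), so $\tfrac1T\tphi(x,y,T)\le\be(h)+\de$ for $\e$ small and $T$ large.

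The hard part is the uniformity in $h$: Birkhoff's theorem carries no a priori rate, so the threshold for $T_1$ above, and the number of ergodic components used, depend on $\mu_h$. This is precisely the content of Mather's corollary, and is handled by a compactness argument on the ball $\{\|h\|\le A\}$ — obtaining at each $h_0$ a threshold with slack $\e/2$ in both action and homology, then using continuity of $\be$ together with upper semicontinuity of the set of minimizing measures to replace $\{\|h\|\le A\}$ by a finite sub-cover and take the maximum of the corresponding thresholds. In the paper I would therefore simply invoke \cite[Corollary on p.~181]{Mat5}.
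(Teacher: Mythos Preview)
You correctly identify that the paper does not prove this proposition at all: it is simply quoted from Mather \cite[Corollary on p.~181]{Mat5} with adjusted notation, so your opening sentence already matches the paper's treatment exactly.

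Your sketch goes well beyond what the paper supplies and is sound in outline. The lower bound via a subgradient $c\in\partial\be(h)$, a critical subsolution $u_c$, and Fenchel's inequality is standard and correct; just note that $\int_\eta\hatom_c=\langle c,G(y)-G(x)\rangle$ holds on the nose only if you take $\om_c=\sum a_i\om_i$ in the fixed basis --- any other representative differs by an exact form whose contribution is again $O_A(1)$, so nothing changes. For the upper bound your diagnosis that the uniformity in $h$ is the real issue is exactly right, and the finite--cover argument on $\{\|h\|\le A\}$ via continuity of $\be$ is the natural fix. One small wrinkle: as written, the determination of $\tau_3$ and $T_1$ is circular (each is defined in terms of the other through the homology defect); this is easily repaired by first fixing $T_1=(1-C\e)T$ for a constant $C=C(A)$ and then verifying that the remaining time $T-T_1-\diam M$ accommodates a connector of speed $O(1)$ covering the residual $G$--increment.
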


We recall that  the element $G(y)-G(x)$  of $H_1(M,\re)$ is
characterized by the relation
\begin{equation}\label{diffvect}
 \langle c_i, G(y)-G(x)\rangle = \oint_x^y \hatom_i,
 \end{equation}
 for any $i=1, \cdots, k$, where $c_1, \cdots, c_k$  is a basis of $H_1(M,\re)$, $\om_i$ are representatives in $c_i$
 and    $\hatom_i$ are the lifts of $\om_i$ to $\tM$, see \eqref{defG}. We deduce from Proposition \ref{MatP}

 \begin{Proposition}\label{math}
 Let $x_\e$, $y_\e$ be in $\tM$, for any $\e >0$, and $h$, $q$ in $H_1(M,\re)$. Let $t_\e$ be
  a sequence of positive times  converging to $t_0>0$. If $\lim_\e F_\e(x_\e)=h$,
   $\lim_\e F_\e(y_\e)=q$ then
\[  \lim_\e  \e  \,  \tphi(y_\e,x_\e,\tfrac {t_\e}\e)= t_0 \, \be\left( \tfrac{h-q}{t_0}\right ). \]
\end{Proposition}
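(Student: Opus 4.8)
The plan is to deduce this from the uniform convergence statement in Proposition \ref{MatP} by a careful bookkeeping of the scaling. Set $T_\e := t_\e/\e$; since $t_\e \to t_0 > 0$ and $\e \to 0$, we have $T_\e \to +\infty$, so for any fixed $T_0$ the hypothesis $T_\e \ge T_0$ holds for $\e$ small. The other hypothesis of Proposition \ref{MatP} requires a uniform bound $\|(G(x_\e)-G(y_\e))/T_\e\| \le A$. To get this, observe that $F_\e = \e G$, so $G(x_\e) - G(y_\e) = \e^{-1}\big(F_\e(x_\e) - F_\e(y_\e)\big)$, and hence
\[
\frac{G(x_\e)-G(y_\e)}{T_\e} = \frac{\e^{-1}\big(F_\e(x_\e)-F_\e(y_\e)\big)}{t_\e/\e} = \frac{F_\e(x_\e)-F_\e(y_\e)}{t_\e}.
\]
Since $F_\e(x_\e) \to h$, $F_\e(y_\e) \to q$ and $t_\e \to t_0 > 0$, the right-hand side converges to $(h-q)/t_0$ in $H_1(M,\re)$; in particular it is bounded, so there is $A>0$ with $\|(G(x_\e)-G(y_\e))/T_\e\| \le A$ for all $\e$ small.

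Now fix $\de > 0$. Apply Proposition \ref{MatP} with this $A$ and $\de$ to obtain $T_0$; then for $\e$ small enough (so that $T_\e \ge T_0$ and the $A$-bound holds),
\[
\left| \frac{1}{T_\e}\,\tphi(y_\e, x_\e, T_\e) - \be\!\left(\frac{G(x_\e)-G(y_\e)}{T_\e}\right) \right| < \de.
\]
Multiplying by $T_\e = t_\e/\e$ and recalling $\tfrac{1}{T_\e}\tphi(y_\e,x_\e,T_\e) = \tfrac{\e}{t_\e}\,\e\,\tphi(y_\e,x_\e,\tfrac{t_\e}{\e}) / \e$ — more directly, $\e\,\tphi(y_\e,x_\e,\tfrac{t_\e}{\e}) = t_\e \cdot \tfrac{1}{T_\e}\tphi(y_\e,x_\e,T_\e)$ — we get
\[
\left| \e\,\tphi\!\left(y_\e,x_\e,\tfrac{t_\e}{\e}\right) - t_\e\, \be\!\left(\frac{G(x_\e)-G(y_\e)}{T_\e}\right) \right| < t_\e\,\de.
\]
Finally, $\be$ is continuous on $H_1(M,\re)$ (it is convex and finite, being superlinear's dual side — or one simply invokes its standard continuity), so $\be\big((G(x_\e)-G(y_\e))/T_\e\big) \to \be((h-q)/t_0)$ by the displayed convergence above; combined with $t_\e \to t_0$ and the fact that $t_\e\,\de \to t_0\,\de$ can be made arbitrarily small, letting $\e \to 0$ and then $\de \to 0$ yields $\e\,\tphi(y_\e,x_\e,\tfrac{t_\e}{\e}) \to t_0\,\be((h-q)/t_0)$.

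The only genuinely delicate point is the interchange between the ``$\e$-dependent argument'' of $\be$ and its limit: one must make sure the argument $(G(x_\e)-G(y_\e))/T_\e$ stays in a fixed compact set (which it does, since it converges) so that continuity of $\be$ there applies, and that the error term $t_\e \de$ is controlled uniformly. Both are immediate once the identity $(G(x_\e)-G(y_\e))/T_\e = (F_\e(x_\e)-F_\e(y_\e))/t_\e$ is in hand; everything else is routine limit-chasing. I would expect no serious obstacle here — the content is entirely in Proposition \ref{MatP}, and this proposition is just the translation of that uniform estimate into the language of $F_\e$-convergence that the homogenization proof in Section \ref{STA} needs.
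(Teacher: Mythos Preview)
Your proposal is correct and follows essentially the same route as the paper: both apply Proposition~\ref{MatP} after observing the scaling identity $(G(x_\e)-G(y_\e))/T_\e = (F_\e(x_\e)-F_\e(y_\e))/t_\e$, multiply through by $t_\e$, and finish using continuity of $\be$. The paper phrases the bounds via constants $a \le t_\e \le b$ rather than working directly with $T_\e$, but this is cosmetic.
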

\begin{proof} Let $a$, $b$  be constants estimating from below  and above, respectively, $t_\e$,
at least for $\e$ suitably small. We take $A$, $\de$, $T_0$ as in Proposition \ref{MatP}.
For $\e$ small we have
\[  \lV\tfrac { \e \, (G(x_\e)-G(y_\e))}a\rV\le A 
\quad\text{ and }\quad \tfrac  a\e > T_0.
\]
We derive in force of Proposition \ref{MatP},
\[
   \lv  \e  \,  \tphi(y_\e,x_\e, \tfrac {t_\e}\e) -  t_\e \,\be\left(\tfrac{F_\e(x_\e)-F_\e(y_\e)}
{t_\e}\right)\rv < b \, \de \quad\hbox{for  $\e$ small}.
\]
Therefore
\[ \lim_\e \lv  \e  \,  \tphi(y_\e,x_\e, \tfrac {t_\e}\e) -  t_\e \,\be\left(\tfrac{F_\e(x_\e)-F_\e(y_\e)}
{t_\e}\right)\rv =0,\]
and the assertion follows being $\be(\cdot)$ continuous.

\end{proof}
 \smallskip

\medskip

 \begin{Lemma} \label{lemma1}  Given a compact subset  $\K$ in $H_1(M,\re)$,  and a
  compact interval $I \subset ]0,+ \infty [$, there is
  a positive constant $C$
such that
\[ d_\e(x,y) \leq C\]
for $\e >0$ suitably small, any $ x \in F_\e^{-1}(\K)$, $ t \in I$  and $y \in
\tM$ realizing the minimum in the formula yielding $v^\e(x,t)$.
\end{Lemma}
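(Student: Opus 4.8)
The plan is to obtain the bound from a suitable comparison between $v^\e(x,t)$ and a competitor built over $y$'s chosen in the $\G$-orbit of $x_0$, combined with the superlinearity of $L$. First I would fix $\K$ and $I=[a,b]\subset\,]0,+\infty[$ and record the two basic ingredients. Ingredient one: a uniform upper bound for $v^\e(x,t)$ when $x\in F_\e^{-1}(\K)$ and $t\in I$. Using the Lax--Oleinik formula \eqref{vef} with a single fixed test curve (for instance a curve of length $O(1)$ connecting a nearby orbit point of $x_0$ to $x$, travelled in time $t/\e$ after the rescaling, which by the change of variables leading to \eqref{vef} costs $\e\,\tphi(y,x,t/\e)$), one gets $v^\e(x,t)\le f_\e(y)+\e\,\tphi(y,x,t/\e)$; since $d(x,x_0+\on)\le D$ for an appropriate orbit point and $F_\e$ maps the orbit onto $\e\G$, $\e$-dense in $H_1(M,\re)$, one may pick $y$ with $F_\e(y)$ bounded, hence $f_\e(y)$ bounded (as $f_\e$ uniformly $F_\e$-converges to the continuous, linear-growth $f$), and $\e\,\tphi(y,x,t/\e)$ bounded by Proposition \ref{math} (or directly by the definition of $\tphi$ using a short connecting curve of controlled speed). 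This yields $v^\e(x,t)\le C_1$ uniformly.

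Ingredient two: a matching lower bound on the contribution of the minimizing $y$. Let $y\in\tM$ realize the minimum in \eqref{vef}, so $v^\e(x,t)=f_\e(y)+\e\,\tphi(y,x,t/\e)$. Since $f_\e$ is uniformly $F_\e$-close to $f$ and $f$ has at most linear growth, $f_\e(y)\ge -C_2(1+\|F_\e(y)\|)\ge -C_2'(1+\e\,d(x_0,y))$ by \eqref{FLip}. For the action term, superlinearity of $L$ gives, after the rescaling $\eta(s)=\ga(\e s)$, a lower bound of the form $\e\,\tphi(y,x,t/\e)\ge \e\,\big(\theta(\,\e^{-1}d_\e(x,y)/t\,)\cdot t/\e - C_3 t/\e\big)$, i.e. $\tfrac1t\,\e\,\tphi(y,x,t/\e)\ge \theta\!\big(\tfrac{d(x,y)}{t/\e}\big)-C_3$, where $\theta$ is a superlinear function coming from the uniform superlinearity of $L$; since $t/\e$ is large this is essentially a superlinear lower bound in $d_\e(x,y)/t$. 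Rearranging, $v^\e(x,t)\ge -C_2'(1+\e d(x_0,y)) + t\big(\theta(d_\e(x,y)/(t\e))-C_3\big)$. Here I need to be mildly careful translating between $d(x,y)$, $d(x,x_0)$ and $d(x_0,y)$, but the triangle inequality and $x\in F_\e^{-1}(\K)$ (which bounds $\e\,d(x,x_0)$ via \eqref{FLow}) control the cross terms.

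Combining the two ingredients: $C_1\ge v^\e(x,t)\ge t\,\theta\!\big(\tfrac{d_\e(x,y)}{t}\big)-C_3 t - C_2'(1+\e\,d(x_0,y))$, and since $\e\,d(x_0,y)\le \e\,d(x_0,x)+d_\e(x,y)\le \mathrm{const}+d_\e(x,y)$, the right-hand side is bounded below by $t\,\theta(d_\e(x,y)/t)-C_4 d_\e(x,y)-C_5$ with $\theta$ superlinear and $t\in[a,b]$. Superlinearity of $\theta$ forces $d_\e(x,y)$ to stay bounded by some $C$ depending only on $\K$, $I$ (and $H$), which is exactly the claim. The main obstacle is the bookkeeping in ingredient two — getting a clean superlinear lower bound for $\e\,\tphi(y,x,t/\e)$ in terms of $d_\e(x,y)/t$ that is uniform for $\e$ small and $t\in I$, and making sure the constant extracted from uniform superlinearity of $L$ does not secretly depend on $\e$; once that estimate is in hand, the conclusion is a one-line coercivity argument.
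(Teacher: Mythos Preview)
Your proposal is correct and follows essentially the same route as the paper: an upper bound on $v^\e(x,t)$ from a simple competitor, a lower bound on $f_\e(y)+\e\,\tphi(y,x,t/\e)$ from the linear growth of $f$ (transferred to $f_\e$ via uniform $F_\e$--convergence) together with the superlinearity of $L$, and then a coercivity comparison. The paper streamlines two points you make more elaborate: for the upper bound it simply takes the constant curve at $x$, giving $v^\e(x,t)\le f_\e(x)+t\,L(x,0)$, so no detour through orbit points or Proposition~\ref{math} is needed; and for your ``main obstacle'' it uses directly that $L(z,v)\ge M|v|-N$ uniformly on compact $M$, hence $\e\,\tphi(y,x,t/\e)\ge M\,d_\e(x,y)-N\,t$, which is exactly the clean bound you were looking for and is manifestly $\e$--uniform.
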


\begin{proof} By linear growth assumption on $f$ we find
\[ f(h) \geq - A \, \|h\| - B \]
for any $ h \in H_1(M, \re)$ and suitable positive constants $A$
and $B$, by applying uniform convergence of $f_\e$ to $f$ we infer
\begin{equation}\label{lem10}
    f_\e(x) \geq - A \, \e \, \|G(x)\| - (B +1) \qquad x \in \tM
\end{equation}
for $\e$ suitably small and, in addition, we find a constant $Q$ with
\begin{equation}\label{lem100}
    f_\e(x) \leq Q \qquad\hbox{for  $\e$ small, $x \in  F_\e^{-1}(\K)$.}
\end{equation}
 Since by condition \eqref{c1} in the definition of spaces convergence
$\|G(x)\|$  and $d(x,y)$ are
infinite of the same order, as $d(x,x_0) \to + \infty$, for any
fixed $x_0$,   we deduce from \eqref{lem10}, up to adjusting constants $A$,
$B$\begin{equation}\label{lem11}
     f_\e(x) \geq - A \, d_\e(x,x_0) - (B +1) \qquad x \in \tM.
\end{equation}
By superlinearity of $L$ we are able to find, for any $M>0$, a
positive $N$ such that, taken  a pair $x$, $y$ of elements of
$\tM$ and any curve $\ga$ linking them in $[0,t]$, one has
\begin{equation}\label{lem12}
    \int_0^t L(\ga, \e \,\dot\ga)\;dt \geq M \,  d_\e(x,y) - N \, t.
\end{equation}
By \eqref{lem100}
\[ v^\e(x_0,t) \leq Q +  t \, \min_{y \in M} L(y,0) \qquad\hbox{for $\e$ small, $x_0 \in  F_\e^{-1}(\K)$.}\]
By combining  this last inequality \eqref{lem11}, \eqref{lem12}, and taking into account the very definition of $v^\e$,
  we get the assertion.
\end{proof}

\medskip

 \noindent {\bf Proof of theorem~\ref{TA}:}

 Let $(h_0,t_0)$ be in $H_1(M,\re)  \times ]0,+ \infty[$.  Let $(x_\e,t_\e)$ be  a (sub)sequence  in
 $\tM \times ]0,+ \infty[ $ with $\lim_\e F_\e(x_\e)= h_0$, $\lim_\e
 t_\e=t_0$, our task is to show
 \begin{equation}\label{main1}
    \lim_\e v^\e(x_\e,t_\e)= u(h_0,t_0).
\end{equation}
Assume  a subsequence $v^{\e_k}(x_{\e_k},t_{\e_k})$ to have limit
 and consider   $y_{\e_k}$   optimal for $v^{\e_k}(x_{\e_k},t_{\e_k})$,
then, according to Lemma \ref{lemma1},
 $d_{\e_k}(x_{\e_k},y_{\e_k}) < C$ for  $\e_k$ small,  which implies that
 $\|F_\e(x_{\e_k})-F_\e(y_{\e_k})\|$, and consequently  $\|h_0-F_\e(y_{\e_k}\|$ are
 bounded. We deduce that
   $F_{\e_k}(y_{\e_k})$
 converges, up to subsequences,
 to some element $q_0 \in H_1(M,\re)$. We  therefore get in force of  Proposition \ref{math}
 \[ \lim_{\e_k} \e_k \, \tphi(y_{\e_k},x_{\e_k},\tfrac {t_{\e_k}}{\e_k}) = t_0 \, \be( \tfrac{h_0-q_0}{t_0}).\]
 Exploiting the uniform $F_\e$--convergence of $f_\e$ to $f$, we further
 derive
 \[ \lim_{\e_k} f(y_{\e_k}) + \e_k \, \tphi(y_{\e_k},x_{\e_k},\tfrac {t_{\e_k}}{\e_k})
    = f(q_0) + t_0 \, \be( \tfrac{h_0-q_0}{t_0}) \geq u(h_0,t_0)\]
    and consequently
\begin{equation}\label{main2}
    \liminf_\e  v^\e(x_\e,t_\e)  \geq u(h_0,t_0).
\end{equation}
We denote by $\bar q$ an optimal element for $u(h_0,t_0)$, there
is a sequence $y_\e$ in $\tM$ with  $\lim_\e F_\e(y_\e)= \bar q$,
therefore, again by  Proposition \ref{math} and convergence of
$f_\e$ to $f$ we get
 \[ \limsup v^\e(x_\e,t_\e) \leq \lim_{\e} f_\e(y_\e) + \e \, \tphi(y_{\e},x_{\e},\tfrac {t_{\e}}{\e}) = f(\bar q)+ t_0 \,
 \be( \tfrac{h_0- \bar q}{t_0})=u(q_0,t_0),\]
 this limit relation together with \eqref{main2} gives
 \eqref{main1}.

 \qed

 \bigskip
 \bigskip

  \section{subcovers}\label{subcovers}

  \noindent
  {\bf Proof of Theorem~\ref{TB}:}

Consider the lifts to $\tM$ of the solutions
   $v^\e$ and the initial conditions $f_\e$, as well as the the lift to $H_1(M,\re)$ of $f$ :
  $$ \tv^\e(x,t) := v^\e(\pi(x),t),
  \qquad
  \tf_\e(x) = f_\e( \pi(x)),
\qquad
 \tf(h) = f( \ff (h)).$$
 Due to the fact that $\hH$  is the lift of an Hamiltonian $H$ defined in $T^*M$ and the differential structures
  of $\tM$, $\hM$ are the same,   $\tv^\e(x,t)$ is solution to \eqref{vve1} with initial datum $ \tf_\e(x)$.
Given $\e >0$ and   $x_0  \in \tM$,  we have
\begin{equation}\label{main3}
    |\tf_\e(x_0) - \tf(F_\e(x_0))|= |f_\e( \pi(x_0)) - f( \ff (F_\e(x_0))|= |f_\e( \pi(x_0)) - f(\hF_\e(\pi(x_0)))|,
\end{equation}
 in addition if $\lim_\e F_\e(x_\e)= h_0$ for some sequence $x_\e$ in $\tM$, $h_0 \in H_1(M,\re)$,  then
 \[ \lim_\e \hF(\pi(x_\e)) = \lim_\e \ff F_\e(x_\e) = \ff (h_0)\]
 then,  by the  uniform  $\hF_\e$--convergence of $f_\e$ to  $f$ as $\e \to 0$
 \[ \lim_\e \tf_\e(x_\e)= \lim_\e f_\e(\pi(x_\e))= f(\ff (h_0))= \tf(h_0) \]
 this limit relation together \eqref{main3} shows the uniform  $F_\e$--convergence
of  $\tf_\e$ to  $\tf$.
By Theorem~\ref{TA} we therefore get that $\tv^\e(x,t)$ locally uniformly $F_\e$--converges to
\[  \tu(q,t):= \inf_{h\in\H_1(M,\re)}\Big\{\,\tf(h) + t\be\left(\tfrac{q-h}t\right)\Big\} \qquad (q,t) \in H_1(M,\re) \times [0,+\infty [. \]
If $ z \in \ker \ff$ then
\begin{align*}
\tu(q+z,t) &= \inf_{h\in\H_1(M,\re)}\Big\{\,\tf(h) + t\be\left(\tfrac{q+z-h}t\right)\Big\}\\ &=
\inf_{h \in\H_1(M,\re)}\Big\{\,\tf(h-z) + t\be\left(\tfrac{q+z-h}t\right)\Big\} = \tu(q,t).
\end{align*}
From this we recognize that $\tu$ is the lift  to $H_1(M, \re)$ of a function $u$ defined in $\im \ff \times [0,+ \infty[$.
  Using \eqref{hbe}, we have for any $q_0 \in \im \ff$, $ h_0 \in H_1(M, \re)$ with $\ff(h_0)= q_0$
  \begin{align*}
  u(q_0,t) &= \inf_{z \in \ker \ff} \tu(h_0+z,t) = \inf_{h\in H_1(M, \re)}\Big\{\,\tf(h) +
  \inf_{z \in \ker \ff} t\be\left(\tfrac{h_0+z-h}t\right)\Big\}
  \\ &=\inf_{q \in \im \ff}\Big\{\, f(q) + t \hbe\left(\tfrac{q_0-q}t\right)\Big\},
\end{align*}
so that  $u$ is the solution to the limit problem \eqref{lim1}--\eqref{lim2}. It is left to investigate the convergence
 of $v^\e$ to $u$. We consider $(q_0,t_0) \in \im \ff \times ]0,+ \infty[$, a (sub)sequence
 $(y_\e,t_\e) \in \hM \times ]0,+ \infty[$
 with $\lim_\e \hF_\e(y_\e)=q_0$, $\lim_\e t_\e=t_0$; then   there is  a compact subset $\K$ of $\im \ff$ with
\[ \ff F_\e(x_\e) \in \K \qquad\hbox{for $\e$ small, $x_\e \in \pi^{-1}(y_\e)$.}\]
   We can select $\K_0 \subset  \ff^{-1}(\K)$, $\K_0$  compact in $H_1(M,\re)$, with $\ff(\K_0)= \K$,
 in addition, taking into
 account that $H_1(M,\Z) \cap \ker \ff$ is $B$--dense  in $\ker \ff$ and \eqref{ff}, we infer that there is a compact
 enlargement $\K_1$ of $\K_0$ such that
 \[ \{x_\e \in \pi^{-1}(y_\e) \mid F_\e(x_\e) \in \K_1\}\neq \emptyset \quad\hbox{for $\e$ small.}\]
Choosing  $F_\e(x_\e)$, for $x_\e$ in the above set,   we build up a sequence  converging, up to a subsequence,
to some $h_0 \in H_1(M,\re)$ with $\ff(h_0)=q_0$, then by local uniform $F_\e$--convergence of $\tv^\e$ to $\tu$ we get
\[  \lim_\e v^\e(y_\e,t_\e)= \lim_\e \tv^\e(x_\e,t_\e)= \tu(h_0,t_0) = u(q_0,t_0).\]
This fact shows the sought local uniform $\hF_\e$--convergence of $v^\e$ to
$u$ and concludes the proof.

\qed


\begin{thebibliography}{10}

\bibitem{CILib}
Gonzalo Contreras and Renato Iturriaga, \emph{{G}lobal {M}inimizers of
  {A}utonomous {L}agrangians}, $22^{\text{o}}$ Coloquio Bras. Mat., IMPA, Rio
  de Janeiro, 1999.

\bibitem{CIPP}
Gonzalo Contreras, Renato Iturriaga, Gabriel~P. Paternain, and Miguel
  Paternain, \emph{Lagrangian graphs, minimizing measures and {M}a\~n\'e's
  critical values}, Geom. Funct. Anal. \textbf{8} (1998), no.~5, 788--809.

\bibitem{CrLi1}
Michael~G. Crandall and Pierre-Louis Lions, \emph{On existence and uniqueness
  of solutions of {H}amilton-{J}acobi equations}, Nonlinear Anal. \textbf{10}
  (1986), no.~4, 353--370.

  
 \bibitem{DaviZ}
Andrea~Davini and Maxime~Zavidovique, \emph{On the (non) existence
  of viscosity solutions of multi--time {H}amilton-{J}acobi equations},
  Preprint 2013.  

\bibitem{FathiBook}
Albert Fathi, \emph{\sl {W}eak {KAM} {T}heorem in {L}agrangian {D}ynamics}, To
  appear.

\bibitem{Fa1}
\bysame, \emph{Th\'eor\`eme {K}{A}{M} faible et th\'eorie de {M}ather sur les
  syst\`emes lagrangiens}, C. R. Acad. Sci. Paris S\'er. I Math. \textbf{324}
  (1997), no.~9, 1043--1046.

\bibitem{Ish1}
Hitoshi Ishii, \emph{Remarks on existence of viscosity solutions of
  {H}amilton-{J}acobi equations}, Bull. Fac. Sci. Engrg. Chuo Univ. \textbf{26}
  (1983), 5--24.

\bibitem{LPV}
P.-L. Lions, G.~Papanicolau, and S.~R.~S. Varadhan, \emph{Homogenization of
  {H}amilton-{J}acobi equations}, preprint, unpublished, 1987.

\bibitem{Ma7}
Ricardo Ma\~n{\'e}, \emph{Lagrangian flows: the dynamics of globally minimizing
  orbits}, International Conference on Dynamical Systems (Montevideo, 1995),
  Longman, Harlow, 1996, Reprinted in Bol. Soc. Brasil. Mat. (N.S.) {\bf 28}
  (1997), no. 2, 141--153., pp.~120--131.

\bibitem{Mat5}
{John N.} Mather, \emph{Action minimizing invariant measures for positive
  definite {L}agrangian systems}, Math. Z. \textbf{207} (1991), no.~2,
  169--207.

\end{thebibliography}
%

   \def\cprime{$'$} \def\cprime{$'$} \def\cprime{$'$} \def\cprime{$'$}
\providecommand{\bysame}{\leavevmode\hbox to3em{\hrulefill}\thinspace}
\providecommand{\MR}{\relax\ifhmode\unskip\space\fi MR }
\providecommand{\MRhref}[2]{%
  \href{http://www.ams.org/mathscinet-getitem?mr=#1}{#2}
}
\providecommand{\href}[2]{#2}

\end{document}